\DeclareMathOperator\arctanh{arctanh}
\newcommand{\bh}{\mathbf{h}}
\newcommand{\ba}{\mathbf{a}}
\newcommand{\Ba}{\boldsymbol{a}}
\newcommand{\Bb}{\boldsymbol{b}}
\newcommand{\Bu}{\boldsymbol{u}}
\newcommand{\cD}{\mathcal{D}}
\newcommand{\cF}{\mathcal{F}}
\newcommand{\cT}{\mathcal{T}}
\newcommand{\cC}{\mathcal{C}}
\newcommand{\cB}{\mathcal{B}}
\newtheorem{theorem}{Theorem}
\newtheorem{corollary}{Corollary}
\newtheorem{lemma}{Lemma}
\newtheorem{defn}{Definition}
\newtheorem{rmrk}{Remark}
\newtheorem{proposition}{Proposition}
\newenvironment{remark}{\begin{rmrk}\normalfont}{\end{rmrk}}
\newenvironment{proof}{\paragraph{Proof:}}{\hfill$\square$}
\begin{document}
\pagestyle{myheadings}
\markboth{\centerline{R.~T.~Kozma and J.~Szirmai}}
{New Horoball Packing Density Lower Bound in Hyperbolic 5-space}
\title
{New Horoball Packing Density Lower Bound in Hyperbolic 5-space}

\author{\medbreak \medbreak {\normalsize{}} \\
\normalsize Robert T. Kozma$^{(1),(2)}$ ~and Jen\H{o}  Szirmai$^{(2)}$ \\
\normalsize (1) Department of Mathematics, Statistics, and Computer Science\\
\normalsize University of Illinois at Chicago \\
\normalsize Chicago IL 60607 USA \\
\normalsize (2) Budapest University of Technology and Economics\\
\normalsize Institute of Mathematics, Department of Geometry \\
\normalsize H-1521 Budapest, Hungary \\
\normalsize E-mail: rkozma2@uic.edu,~szirmai@math.bme.hu }

\date{}

%%%%%%%%%%%%%%%%%%%%%%%%%%%%%%%%%%%%%%%%%%%
\maketitle
%%%%%%%%%%%%%%%%%%%%%%%%%%%%%%%%%%%%%%%%%%%

\begin{abstract}
We describe the optimal horoball packings of asymptotic Koszul type Coxeter simplex tilings of $5$-dimensional hyperbolic space where the symmetries of the packings are generated by Coxeter groups. We find that the optimal horoball packing density of $\delta_{opt}=0.59421\dots$ is realized in an entire  commensurability class of arithmetic Coxeter tilings. Eleven optimal arrangements are achieved by placing horoballs at the asymptotic vertices of the corresponding Coxeter simplices that give the fundamental domains. When multiple horoball types are allowed, in the case of the arithmetic Coxeter groups, the relative packing densities of the optimal horoball types are rational submultiples of $\delta_{opt}$, corresponding to the Dirichlet-Voronoi cell densities of the packing. The packings given in this paper are so far the densest known in hyperbolic $5$-space.

\end{abstract}

%==================================================================%
%                             the main article                               %
%==================================================================%

%%%%%%%%%%%%%%%%%%%%%%%%%%%%%%%%%%%%%%%%%%%
%%%%%%%%%%%%%%%%%%%%%%%%%%%%%%%%%%%%%%%%%%%
\section{Introduction}
%%%%%%%%%%%%%%%%%%%%%%%%%%%%%%%%%%%%%%%%%%%
%%%%%%%%%%%%%%%%%%%%%%%%%%%%%%%%%%%%%%%%%%%

Let $X$ denote a space of constant curvature, either the $n$-dimensional sphere $\mathbb{S}^n$, 
Euclidean space $\mathbb{E}^n$, or 
hyperbolic space $\mathbb{H}^n$ with $n \geq 2$. An important question of discrete geometry is to find the highest possible packing density in $X$ by congruent non-overlapping balls of a given radius \cite{Be}, \cite{G--K}. 
%Euclidean cases are the best explored. For example,
%the densest possible lattice packings are known for $\mathbb{E}^2$ through $\mathbb{E}^8$. In higher dimensions, however, mostly only bounds are known.
%Furthermore, 
%no sharp bounds exist for irregular packings in $\mathbb{E}^n$ when $n>3$.  
%One major recent development has been the settling of the long-standing Kepler conjecture, 
%part of Hilbert's 18th problem, by Thomas Hales at the turn of the 21st century.
%Hales' computer-assisted proof was largely based on a program set forth by L. Fejes T\'oth in the 1950's \cite{Ha}.
The definition of packing density is crucial in hyperbolic space as shown by B\"or\"oczky \cite{B78}, for standard examples also see \cite{G--K}, \cite{R06}. 
The most widely accepted notion of packing density considers the local densities of balls with respect to their Dirichlet--Voronoi cells (cf. \cite{B78} and \cite{K98}). In order to consider horoball packings in $\overline{\mathbb{H}}^n$, we use an extended notion of such local density. 

Let $B$ be a horoball in packing $\cB$, and $P \in \overline{\mathbb{H}}^n$ be an arbitrary point. 
Define $d(P,B)$ to be the perpendicular distance from point $P$ to the horosphere $S = \partial B$, where $d(P,B)$ 
is taken to be negative when $P \in B$. The Dirichlet--Voronoi cell $\cD(B,\cB)$ of a horoball $B$ is defined as the convex body
\begin{equation}
\cD(B,\cB) = \{ P \in \mathbb{H}^n | d(P,B) \le d(P,B'), ~ \forall B' \in \cB \}. \notag
\end{equation}
Both $B$ and $\cD$ are of infinite volume, so the usual notion of local density is
modified as follows. Let $Q \in \partial{\mathbb{H}}^n$ denote the ideal center of $B$ at infinity, and take its boundary $S$ to be the one-point compactification of Euclidean $(n - 1)$-space.
Let $B_C^{n-1}(r) \subset S$ be the Euclidean $(n-1)$-ball with center $C \in S \setminus \{Q\}$.
Then $Q \in \partial {\mathbb{H}^n}$ and $B_C^{n-1}(r)$ determine a convex cone 
$\cC^n(r) = cone_Q\left(B_C^{n-1}(r)\right) \in \overline{\mathbb{H}}^n$ with
apex $Q$ consisting of all hyperbolic geodesics passing through $B_C^{n-1}(r)$ with limit point $Q$. The local density $\delta_n(B, \cB)$ of $B$ to $\cD$ is defined as
\begin{equation}
\delta_n(\cB, B) =\varlimsup\limits_{r \rightarrow \infty} \frac{vol(B \cap \cC^n(r))} {vol(\cD \cap \cC^n(r))}. \notag
\end{equation}
This limit is independent of the choice of center $C$ for $B^{n-1}_C(r)$.

In the case of periodic ball or horoball packings, the local density defined above can be extended to the entire hyperbolic space. This local density
is related to the simplicial density function (defined below) that we generalized in \cite{Sz12} and \cite{Sz12-2}.
In this paper we will use such definition of packing density (cf. Section 3).  

The alternate method suggested by Bowen and Radin \cite{Bo--R}, \cite{R06} uses Nevo's 
point-wise ergodic theorem 
to assure that the standard Euclidean limit notion of density is 
well-defined for $\mathbb{H}^n$. First they define a metric on 
the space $\Sigma_{\mathcal{P}}$ of relatively-dense packings by compact objects, based on Hausdorff distance, corresponding to uniform convergence on compact subsets of $\mathbb{H}^n$. Then they study the measures invariant under isometries of $\Sigma_{\mathcal{P}}$ rather than individual packings. There is a large class of packings of compact objects in hyperbolic space for which such density is well-defined. 
Using ergodic methods, 
they show that if there is only one optimally dense packing of $\mathbb{E}^n$ or $\mathbb{H}^n$, up to congruence, by congruent copies of bodies from some fixed finite collection, then that packing must have a symmetry group with compact fundamental domain. Moreover, for almost any radius $r \in [0,\infty)$ the optimal ball packing in $\mathbb{H}^n$ has low symmetry. 

A Coxeter simplex is an $n$-dimensional simplex in $X$ with dihedral angles either submultiples of $\pi$ or zero. 
The group generated by reflections on the sides of a Coxeter simplex is called a Coxeter simplex reflection group. 
Such reflections give a discrete group of isometries of $X$ with the Coxeter simplex as its fundamental domain; 
hence the groups give regular tessellations of $X$ if the fundamental simplex is characteristic. The Coxeter groups are finite for $\mathbb{S}^n$, and infinite for $\mathbb{E}^n$ or $\mathbb{H}^n$. 

In $\mathbb{H}^n$ we have noncompact simplices with ideal vertices at infinity $\partial \mathbb{H}^n$. 
Coxeter simplices exist only for dimensions $n = 2, 3, \dots, 9$; furthermore, only a finite number exist in dimensions $n \geq 3$. 
Johnson {\it et al.} \cite{JKRT} computed the volumes of all Coxeter simplices in hyperbolic $n$-space, also see Kellerhals \cite{K91}. 
Such simplices are the most elementary building blocks of hyperbolic manifolds,
the volume of which is an important topological invariant. 

In the $n$-dimensional space $X$ of constant curvature
 $(n \geq 2)$, define the simplicial density function $d_n(r)$ to be the density of $n+1$ spheres
of radius $r$ mutually tangent to one another with respect to the simplex spanned by the centers of the spheres. L.~Fejes T\'oth and H.~S.~M.~Coxeter
conjectured that the packing density of balls of radius $r$ in $X$ cannot exceed $d_n(r)$.
Rogers \cite{Ro64} proved this conjecture in Euclidean space $\mathbb{E}^n$.
The $2$-dimensional spherical case was settled by L.~Fejes T\'oth \cite{FTL}, and B\"or\"oczky \cite{B78}, who proved the following extension:
\begin{theorem}[K.~B\"or\"oczky]
In an $n$-dimensional space of constant curvature, consider a packing of spheres of radius $r$.
In the case of spherical space, assume that $r<\frac{\pi}{4}$.
Then the density of each sphere in its Dirichlet--Voronoi cell cannot exceed the density of $n+1$ spheres of radius $r$ mutually
touching one another with respect to the simplex spanned by their centers.
\end{theorem}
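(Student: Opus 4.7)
The plan is to reduce the global density estimate to a purely local one on small simplicial pieces obtained by canonically subdividing the Dirichlet--Voronoi cell, and then to establish that each local density is dominated by the corresponding local density in the extremal configuration of $n+1$ pairwise tangent spheres. Fixing a sphere $B_0$ of the packing with center $c_0$ and its Dirichlet--Voronoi cell $\cD_0$, note that $\cD_0$ is a (possibly unbounded) convex polytope bounded by portions of the perpendicular bisectors of the segments $c_0 c_i$ joining $c_0$ to the centers of the neighboring spheres. Since the spheres are non-overlapping, $|c_0 c_i| \geq 2r$, so each bounding hyperplane stays outside the interior of $B_0$.

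Next, I would subdivide $\cD_0$ into orthoschemes associated to flags of faces. Concretely, for each complete flag $F_0 \subset F_1 \subset \dots \subset F_{n-1} \subset F_n = \cD_0$ of faces of $\cD_0$, take the simplex with vertices $c_0$, a vertex of $F_0$, and the circumcenters $q_k$ of $F_k$ for $k=1,\dots,n-1$. These orthoschemes tile $\cD_0$, and the local density $\mathrm{vol}(B_0)/\mathrm{vol}(\cD_0)$ is the volume-weighted average of the ratios $\mathrm{vol}(\Sigma \cap B_0)/\mathrm{vol}(\Sigma)$ over the pieces $\Sigma$. The key local inequality to establish is that for each such orthoscheme $\Sigma$, with opposite facet on the perpendicular bisector of a segment $c_0 c'$ satisfying $|c_0 c'|\geq 2r$,
\[
\frac{\mathrm{vol}(\Sigma \cap B_0)}{\mathrm{vol}(\Sigma)} \;\leq\; \frac{\mathrm{vol}(\Sigma^{*}\cap B_0)}{\mathrm{vol}(\Sigma^{*})},
\]
where $\Sigma^{*}$ is the analogous orthoscheme piece associated to the regular $n$-simplex of edge length $2r$ whose vertices are the centers of $n+1$ mutually tangent spheres of radius $r$. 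Summing this over the pieces of $\cD_0$ yields the theorem.

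The technical heart of the argument is the monotonicity in the displayed inequality. I would prove it by expressing both $\mathrm{vol}(\Sigma \cap B_0)$ and $\mathrm{vol}(\Sigma)$ in terms of the dihedral angles and edge lengths of $\Sigma$ via the Schl\"afli differential formula, which is valid in all three spaces of constant curvature, and then analysing the sign of the derivative as $\Sigma$ is deformed continuously toward $\Sigma^{*}$ by simultaneously shortening the distance to the neighboring center and symmetrizing the remaining angles. The main obstacle is controlling this derivative uniformly over all admissible orthoscheme shapes and ensuring that the deformation can always be carried out while preserving the non-overlap condition. In the spherical case the hypothesis $r < \pi/4$ is precisely what is needed so that the extremal regular simplex of edge length $2r$ exists in $\mathbb{S}^n$ and the monotonicity argument does not reverse sign, whereas in the Euclidean and hyperbolic settings no such restriction is required; equality throughout forces all orthoscheme pieces to be isometric to the extremal $\Sigma^{*}$, matching the bound $d_n(r)$.
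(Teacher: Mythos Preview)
The paper does not prove this theorem; it is quoted in the introduction as a known result of B\"or\"oczky \cite{B78} and is used only as background motivation. There is therefore no proof in the paper to compare your proposal against.

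On the merits of your sketch itself: the overall architecture---subdivide the Dirichlet--Voronoi cell into simplicial pieces with apex at the center $c_0$, and bound the local density on each piece by that of the corresponding piece in the regular simplex on $n+1$ mutually tangent spheres---is indeed the strategy of B\"or\"oczky's original proof, which extends Rogers' Euclidean argument \cite{Ro64} to constant curvature. However, two of your steps are not right as written. First, the flag/circumcenter construction you describe does not in general produce a tiling of $\cD_0$ by orthoschemes: circumcenters of faces of a Voronoi cell need not lie in those faces, and the simplices you build can overlap or fail to cover. B\"or\"oczky's actual decomposition proceeds via the dual Delaunay triangulation and a further barycentric-type refinement, and getting this to work in curved space is already nontrivial. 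Second, and more seriously, the displayed local inequality is the entire content of the theorem, and your proposed proof of it---deform $\Sigma$ continuously to $\Sigma^{*}$ and check a sign via Schl\"afli's formula---is precisely where B\"or\"oczky's paper does substantial, delicate work; you have not indicated how the sign is controlled uniformly over all admissible shapes, nor why the deformation stays within the class of orthoschemes arising from packings. Your proposal correctly identifies the skeleton of the argument but does not supply the analysis that constitutes the proof.
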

In hyperbolic space, 
the monotonicity of $d_3(r)$ was proved by B\"or\"oczky and Florian
in \cite{B--F64}; in \cite{Ma99} Marshall 
showed that for sufficiently large $n$, 
function $d_n(r)$ is strictly increasing in variable $r$. Kellerhals \cite{K98} showed $d_n(r)<d_{n-1}(r)$, and that in cases considered by Marshall the local density of each ball in its Dirichlet--Voronoi cell is bounded above by the simplicial horoball density $d_n(\infty)$.

This upper bound for density in hyperbolic space $\mathbb{H}^3$ is $0.85327\dots$, 
which is not realized by packing regular balls. However, it is attained by a horoball packing of
$\overline{\mathbb{H}}^3$ where the ideal centers of horoballs lie on an
absolute figure of $\overline{\mathbb{H}}^3$;
for example, they may lie at the vertices of the ideal regular
simplex tiling with Coxeter-Schl\"afli symbol $[3,3,6]$. 

In \cite{KSz} we proved that the optimal ball packing arrangement in $\mathbb{H}^3$ mentioned above is not unique. We gave several new examples of horoball packing arrangements based on totally asymptotic Coxeter tilings that yield the B\"or\"oczky--Florian upper bound \cite{B--F64}.
 
Furthermore, in \cite{Sz12}, \cite{Sz12-2} we found that 
by allowing horoballs of different types at each vertex of a totally asymptotic simplex and generalizing 
the simplicial density function to $\mathbb{H}^n$ for $(n \ge 2)$,
 the B\"or\"oczky-type density 
upper bound is no longer valid for the fully asymptotic simplices for $n \ge 3$. 
For example, in $\overline{\mathbb{H}}^4$ the locally optimal packing density is $0.77038\dots$, higher than the B\"or\"oczky-type density upper bound of $0.73046\dots$. 
However these ball packing configurations are only locally optimal and cannot be extended to the entirety of the
hyperbolic spaces $\mathbb{H}^n$. Further open problems and conjectures on $4$-dimensional hyperbolic packings are discussed in \cite{G--K--K}. 
Using horoball packings in $\mathbb{H}^4$, allowing horoballs of different types,
we found seven counterexamples (realized by allowing up to three horoball types) 
to one of L. Fejes T\'oth's conjectures stated in his foundational book Regular Figures.

The second-named author has several additional results on globally and locally optimal ball packings 
in $\mathbb{H}^n$, $\mathbb{S}^n$, and 
the eight Thurston geomerties arising from Thurston's geometrization conjecture 
\cite{Sz05-2}, \cite{Sz07-1}, \cite{Sz07-2}, \cite{Sz10}, \cite{Sz13-1}, \cite{Sz13-2}. 
 
In this paper we continue our investigations 
of ball packings, in hyperbolic 5-space. 
Using horoball packings, allowing horoballs of different types when applicable,
we find the packing densities of the horoballs with respect to the Coxeter simplex cells.

%%%%%%%%%%%%%%%%%%%%%%%%%%%%%%%%%%%%%%%%%%%
%%%%%%%%%%%%%%%%%%%%%%%%%%%%%%%%%%%%%%%%%%%
\section{The Cayley--Klein model of  $n$-dimensional hyperbolic geometry}
%%%%%%%%%%%%%%%%%%%%%%%%%%%%%%%%%%%%%%%%%%%
%%%%%%%%%%%%%%%%%%%%%%%%%%%%%%%%%%%%%%%%%%%

In this paper we use the Cayley--Klein model, and a projective interpretation of hyperbolic geometry. This has the advantage of greatly 
simplifying our calculations in higher dimensions as compared to other models such as the conformal Poincar\'e model. In the Klein model, hyperbolic objects are straight or convex if and only if they are straight or convex in the embedded model. Hyperbolic symmetries are then modeled as Euclidean projective transformations using the projective linear group $PGL(n+1,\mathbb{R})$.
In this section we give a brief review of the concepts used in this paper. For a general discussion and background in hyperbolic geometry 
and the projective models of Thurston geometries see \cite{Mol97} and \cite{MSz}.

%%%%%%%%%%%%%%%%%%%%%%%%%%%%%%%%%%%%%%%%%%%
\subsection{The Projective Model}
%%%%%%%%%%%%%%%%%%%%%%%%%%%%%%%%%%%%%%%%%%%

We use the projective model in Lorentzian $(n+1)$-space
$\mathbb{E}^{1,n}$ of signature $(1,n)$, i.e. $\mathbb{E}^{1,n}$ is
the real vector space $\mathbf{V}^{n+1}$ equipped with the bilinear
form of signature $(1,n)$,
$\langle \mathbf{x}, \mathbf{y} \rangle = -x^0y^0+x^1y^1+ \dots + x^n y^n \label{bilinear_form}$
where the non-zero real vectors 
$\mathbf{x}=(x^0,x^1,\dots,x^n)\in\mathbf{V}^{n+1}$ 
and $ \mathbf{y}=(y^0,y^1,\dots,y^n)$ $\in\mathbf{V}^{n+1}$ represent points in projective space 
$\mathcal{P}^n(\mathbb{R})$. $\mathbb{H}^n$ is represented as the
interior of the absolute quadratic form
$Q=\{\mathbf{x}\in\mathcal{P}^n | \langle \mathbf{x}, \mathbf{x} \rangle =0 \}=\partial \mathbb{H}^n$
in real projective space $\mathcal{P}^n(\mathbf{V}^{n+1},
\boldsymbol{V}_{n+1})$. All proper interior points $\mathbf{x} \in \mathbb{H}^n$ satisfy
$\langle \mathbf{x}, \mathbf{x} \rangle < 0$.

The boundary points $\partial \mathbb{H}^n $ in
$\mathcal{P}^n$ represent the absolute points at infinity of $\mathbb{H}^n$.
Points $\mathbf{y}$ satisfying $\langle \mathbf{y}, \mathbf{y} \rangle >
0$ lie outside $\partial \mathbb{H}^n $ and are referred to as outer points
of $\mathbb{H}^n$. Take $P(\mathbf{x}) \in \mathcal{P}^n$, point
$\mathbf{y} \in \mathcal{P}^n$ is said to be conjugate to
$\mathbf{x}$ relative to $Q$ when $\langle
\mathbf{x}, \mathbf{y} \rangle =0$. The set of all points conjugate
to $P(\mathbf{x})$ form a projective (polar) hyperplane
$pol(P)=\{\mathbf{y}\in\mathcal{P}^n | \langle  \mathbf{x}, \mathbf{y} \rangle =0 \}.$
Hence the bilinear form $Q$ induces a bijection
or linear polarity $\mathbf{V}^{n+1} \rightarrow
\boldsymbol{V}_{n+1}$
between the points of $\mathcal{P}^n$
and its hyperplanes.
Point $X (\mathbf{x})$ and hyperplane $\alpha
(\Ba)$ are incident if the value of
the linear form $\Ba$ evaluated on vector $\mathbf{x}$ is
 zero, i.e. $\mathbf{x}\Ba=0$ where $\mathbf{x} \in \
\mathbf{V}^{n+1} \setminus \{\mathbf{0}\}$, and $\Ba \in
\mathbf{V}_{n
+1} \setminus \{\mathbf{0}\}$.
Similarly, lines in $\mathcal{P}^n$ are characterized by
2-subspaces of $\mathbf{V}^{n+1}$ or $(n-1)$-spaces of $\mathbf{V}_{n+1}$ \cite{Mol97}.

Let $P \subset \mathbb{H}^n$ denote a polyhedron bounded by
a finite set of hyperplanes $H^i$ with unit normal vectors
$\Bb^i \in \mathbf{V}_{n+1}$ directed
 towards the interior of $P$:
\begin{equation}
H^i=\{\mathbf{x} \in \mathbb{H}^d | \mathbf{x} \Bb^i =0 \} \ \ \text{with} \ \
\langle \Bb^i,\Bb^i \rangle = 1.
\end{equation}
In this paper $P$ is assumed to be an acute-angled polyhedron
 with proper or ideal vertices.
The Grammian matrix $G(P)=( \langle \Bb^i,
\Bb^j \rangle )_{i,j} ~ {i,j \in \{ 0,1,2 \dots n \} }$  is a
 symmetric matrix of signature $(1,n)$ with entries
$\langle \Bb^i,\Bb^i \rangle = 1$
and $\langle \Bb^i,\Bb^j \rangle
\leq 0$ for $i \ne j$ where

$$
\langle \mbox{\boldmath$b$}^i,\mbox{\boldmath$b$}^j \rangle =
\left\{
\begin{aligned}
&0 & &\text{if}~H^i \perp H^j,\\
&-\cos{\alpha^{ij}} & &\text{if}~H^i,H^j ~ \text{intersect \ along an edge of $P$ \ at \ angle} \ \alpha^{ij}, \\
&-1 & &\text{if}~\ H^i,H^j ~ \text{are parallel in the hyperbolic sense}, \\
&-\cosh{l^{ij}} & &\text{if}~H^i,H^j ~ \text{admit a common perpendicular of length} \ l^{ij}.
\end{aligned}
\right.
$$
This is summarized using the weighted graph or scheme of the polytope $\sum(P)$. The graph nodes correspond 
to the hyperplanes $H^i$ and are connected if $H^i$ and $H^j$ are not perpendicular ($i \neq j$).
If they are connected we write the positive weight $k$ where  $\alpha_{ij} = \pi / k$ on the edge,  
unlabeled edges denote an angle of $\pi/3$. For examples, see the Coxeter diagrams in Table \ref{table:simplex_list}.

In this paper we set the sectional curvature of $\mathbb{H}^n$,
$K=-k^2$, to be $k=1$. The distance $d$ of two proper points
$\mathbf{x}$ and $\mathbf{y}$ is calculated by the formula
\begin{equation}
\cosh{{d}}=\frac{-\langle ~ \mathbf{x},~\mathbf{y} \rangle }{\sqrt{\langle ~ \mathbf{x},~\mathbf{x} \rangle
\langle ~ \mathbf{y},~\mathbf{y} \rangle }}.
\label{prop_dist}
\end{equation}
The perpendicular foot $Y(\mathbf{y})$ of point $X(\mathbf{x})$ dropped onto plane $\Bu$ is given by
\begin{equation}
\mathbf{y} = \mathbf{x} - \frac{ \langle \mathbf{x}, \mathbf{u} \rangle }{\langle \Bu, \Bu \rangle} \mathbf{u},
\end{equation}
where $\Bu$ is the pole of the plane $\mathit{u}$.

%%%%%%%%%%%%%%%%%%%%%%%%%%%%%%%%%%%%%%%%%%%
\subsection{Horospheres and Horoballs in $\mathbb{H}^n$}
%%%%%%%%%%%%%%%%%%%%%%%%%%%%%%%%%%%%%%%%%%%

A horosphere in $\mathbb{H}^n$ ($n \ge 2)$ is a 
hyperbolic $n$-sphere with infinite radius centered 
at an ideal point on $\partial \mathbb{H}^n$. Equivalently, a horosphere is an $(n-1)$-surface orthogonal to
the set of parallel straight lines passing through a point of the absolute quadratic surface. 
A horoball is a horosphere together with its interior. 

In order to derive the equation of a horosphere, we introduce a projective 
coordinate system for $\mathcal{P}^n$ with a vector basis 
$\bold{a}_i \ (i=0,1,2,\dots, n)$ so that the Cayley-Klein ball model of $\mathbb{H}^n$ 
is centered at $(1,0,0,\dots, 0)$, and set an
arbitrary point at infinity to lie at $A_0=(1,0,\dots, 0,1)$. 
The equation of a horosphere with center
$A_0$ passing through point $S=(1,0,\dots,0,s)$ is derived from the equation of the 
the absolute sphere $-x^0 x^0 +x^1 x^1+x^2 x^2+\dots + x^n x^n = 0$, and the plane $x^0-x^n=0$ tangent to the absolute sphere at $A_0$. 
The general equation of the horosphere is
\begin{equation}
0=\lambda (-x^0 x^0 +x^1 x^1+x^2 x^2+\dots + x^n x^n)+\mu{(x^0-x^n)}^2.
\end{equation}
Plugging in for $S$ we obtain
\begin{equation}
\lambda (-1+s^2)+\mu {(-1+s)}^2=0 \text{~~and~~} \frac{\lambda}{\mu}=\frac{1-s}{1+s}. \notag
\end{equation}
For $s \neq \pm1$, the equation of a horosphere in projective coordinates is
\begin{align}
\label{eqn:horosphere}
(s-1)\left(-x^0 x^0 +\sum_{i=1}^n (x^i)^2\right)-(1+s){(x^0-x^n)}^2 & =0.
\end{align}
 %In cartesian coordinates setting $h_i=\frac{x^i}{x^0}$ it becomes
%\begin{equation}  
%\label{eqn:horosphere1}
%\frac{2 \left(\sum_{i=1}^n h_i^2 \right)}{1-s}+\frac{4 \left(h_d-\frac{s+1}{2}\right)^2}{(1-s)^2}=1.
%\end{equation}

In an $n$-dimensional hyperbolic space any two horoballs are congruent in the classical sense: each have an infinite radius.
However, it is often useful to distinguish between certain horoballs of a packing. 
We use the notion of horoball type with respect to the fundamental domain of the given tiling as introduced in \cite{Sz12-2}.

Two horoballs of a horoball packing are said to be of the {\it same type} or {\it equipacked} if 
and only if their local packing densities with respect to a given cell (in our case a 
Coxeter simplex) are equal. If this is not the case, then we say the two horoballs are of {\it different type}. 
For example, in the above discussion horoballs centered at $A_0$ passing through $S$ with different values for the final coordinate $s$ are of different type relative to 
an appropriate cell, all such horoballs yield a one-parameter family.

In order to compute volumes of horoball pieces, we use J\'anos Bolyai's classical formulas from the mid 19-th century:
\begin{enumerate}
\item 
The hyperbolic length $L(x)$ of a horospheric arc that belongs to a chord segment of length $x$ is
\begin{equation}
\label{eq:horo_dist}
L(x)=2 \sinh{\left(\frac{x}{2}\right)} .
\end{equation}
\item The intrinsic geometry of a horosphere is Euclidean, 
so the $(n-1)$-dimensional volume $\mathcal{A}$ of a polyhedron $A$ on the 
surface of the horosphere can be calculated as in $\mathbb{E}^{n-1}$.
The volume of the horoball piece $\mathcal{H}(A)$ determined by $A$ and 
the aggregate of axes 
drawn from $A$ to the center of the horoball is
\begin{equation}
\label{eq:bolyai}
vol(\mathcal{H}(A)) = \frac{1}{n-1}\mathcal{A}.
\end{equation}
\end{enumerate}

%%%%%%%%%%%%%%%%%%%%%%%%%%%%%%%%%%%%%%%%%%%
%%%%%%%%%%%%%%%%%%%%%%%%%%%%%%%%%%%%%%%%%%%
\section{Horoball packings of Coxeter Simplices with Ideal Verticies}
%%%%%%%%%%%%%%%%%%%%%%%%%%%%%%%%%%%%%%%%%%%
%%%%%%%%%%%%%%%%%%%%%%%%%%%%%%%%%%%%%%%%%%%

Let $\cT$ be a $5$-dimensional Coxeter tiling \cite{IH90}, \cite{JKRT2}. A rigid motion mapping one cell of $\cT$ onto
another maps the entire tiling onto itself. The symmetry group of a Coxeter tiling
 contains its Coxeter group, denoted by $\Gamma_\cT$. 
Any simplex cell of $\cT$ acts as a fundamental domain $\cF_{\cT}$
of $\Gamma_\cT$, where the Coxeter group is generated by reflections on the $(n - 1)$-dimensional facets of $\cF_{\cT}$. 
In this paper we consider only asymptotic Koszul Coxeter simplices, i.e. ones that have at least one ideal vertex, in this case the orbifold $\mathbb{H}^5/\Gamma_{\cT}$ has at least one cusp. In Table \ref{table:simplex_list} we list the twelve Coxeter simplices that exist in hyperbolic $5$-space, all of which are asymptotic Koszul type, together with their volumes. 
Note that volumes are given in terms Riemann's zeta function $\zeta(n)=\sum^{\infty}_{r=1}r^{-n}$, or the Dirichlet $L$-function $L(s,d)=\sum^{\infty}_{n=1}\left(\frac{n}{d}\right)n^{-s}$, where $(n/d)$ is the Legendre symbol. The volume of $\overline{P}_5$ was found by Monte-Carlo simulations \cite{JKRT}. 
All but $\widehat{AU}_5$ are arithmetic, the volume of $\widehat{AU}_5$ may be given as an integral equation, and no exact expression is known. For a complete discussion of hyperbolic Coxeter simplices 
and their volumes for dimensions $n \geq 3$, see Johnson {\it et al.} \cite{JKRT}. 

We define the density of a horoball packing $\mathcal{B}_{\cT}$ of a Coxeter simplex tiling $\cT$ as
\begin{equation}
\delta(\mathcal{B}_{\cT})=\frac{\sum_{i=1}^m vol(\mathcal{B}_i \cap \cF_{\cT})}{vol(\cF_{\cT})}.
\end{equation}
Here $\cF_{\cT}$ denotes the simplicial fundamental domain of tiling $\cT$, $m$ is the number of ideal vertices of $\cF_\cT$, 
and $\mathcal{B}_i$ are the horoballs centered at ideal vertices. 
We allow horoballs of different types at the asymptotic vertices of the tiling. 
A horoball type is allowed if it yields a packing: no two horoballs may have an interior point in common. 
In addition we require that no horoball extend beyond the facet opposite the vertex where it is centered so that the packing remains invariant under the actions of the Coxeter group of the tiling.
If these conditions are satisfied, we can extend the packing density from 
the simplicial fundamental domain $\cF_{\cT}$ to the entire $\mathbb{H}^5$ using the Coxeter group $\Gamma_{\tau}$.
In the case of Coxeter simplex tilings, Dirichlet--Voronoi cells coincide with the Coxeter simplices. We denote the optimal horoball packing density as
\begin{equation}
\delta_{opt}(\cT) = \sup\limits_{\mathcal{B}_{\cT} \text{~packing}} \delta(\mathcal{B}_{\tau}).
\end{equation}

\begin{table}
\resizebox{\columnwidth}{!}{%
    \begin{tabular}{|cc|c|c|c|}
    \hline
    Coxeter & ~ & Witt & Simplex & Optimal \\
    Diagram  & Notation & Symbol & Volume & Packing Density\\
    \hline
        Simply Asymptotic & ~ & ~ & ~ & ~\\
    \hline
    \begin{tikzpicture}
	
	\draw (0,0) -- (2.5,0);

	\draw[fill=black] (0,0) circle (.05);
	\draw[fill=black] (.5,0) circle (.05); 
	\draw[fill=black] (1,0) circle (.05);
	\draw[fill=black] (1.5,0) circle (.05);
	\draw[fill=black] (2,0) circle (.05);
	\draw  (2.5,0) circle (.05);

	\node at (1.75,0.175) {$4$};

\end{tikzpicture}
  & $[3,3,3,4,3]$ & $\overline{U}_5$ & $7 \zeta(3)/46080$ & $0.59421\dots$  \\

    \begin{tikzpicture}
	
	\draw (0,0) -- (1.975,0);
	\draw (1,0) -- (1,.5);

	\draw[fill=black]  (0,0) circle (.05);
	\draw[fill=black] (.5,0) circle (.05); 
	\draw[fill=black] (1,0) circle (.05);
	\draw[fill=black] (1.5,0) circle (.05);
	\draw (2,0) circle (.05);
	\draw [fill=black] (1,.5) circle (.05);

	\node at (.25,0.175) {$4$};

    \end{tikzpicture}
  & $[4,3,3^{2,1}]$ & $\overline{S}_5$ & $7 \zeta(3)/15360$ & $0.59421\dots$  \\

    \begin{tikzpicture}
	
	\draw (0.025,0) -- (1.5,0);
	\draw (1,-.5) -- (1,.5);

	\draw (0,0) circle (.05);
	\draw[fill=black] (.5,0) circle (.05); 
	\draw[fill=black] (1,0) circle (.05);
	\draw[fill=black] (1.5,0) circle (.05);
	\draw[fill=black] (1,.5) circle (.05);
	\draw[fill=black] (1,-.5) circle (.05);

    \end{tikzpicture}
  & $[3^{2,1,1,1}]$ & $\overline{Q}_5$ & $7 \zeta(3)/7680$ & $0.59421\dots$  \\

    \begin{tikzpicture}
	
	\draw (0,0) -- (.5,0);
	\draw (.5,0) -- (1,0.25);
	\draw (.5,0) -- (1,-0.25);
	\draw (1,.25) -- (1.5,0.25);
	\draw (1,-.25) -- (1.5,-0.25);
	\draw (1.5,.25) -- (1.5,-0.25);

	\draw (0,0) circle (.05);
	\draw[fill=black] (.5,0) circle (.05); 
	\draw[fill=black] (1,.25) circle (.05);
	\draw[fill=black] (1,-.25) circle (.05);
	\draw[fill=black] (1.5,0.25) circle (.05);
	\draw[fill=black] (1.5,-0.25) circle (.05);
	
\end{tikzpicture}

  & $[3, 3 ^{[5]}]$ & $\overline{P}_5$ & $5^{3/2}L(3,5)/4608$ & $0.56151\dots$  \\

\hline
    Doubly Asymptotic & ~ & ~ & ~ & ~\\
\hline
    \begin{tikzpicture}
	
	\draw (0.025,0) -- (2.475,0);

	\draw (0,0) circle (.05);
	\draw[fill=black] (.5,0) circle (.05); 
	\draw[fill=black] (1,0) circle (.05);
	\draw[fill=black] (1.5,0) circle (.05);
	\draw[fill=black] (2,0) circle (.05);
	\draw (2.5,0) circle (.05);

	\node at (1.25,0.175) {$4$};

\end{tikzpicture}
  & $[3,3,4,3,3]$ & $\overline{X}_5$ & $7 \zeta(3)/9216$ & $0.59421\dots$ \\
  
    \begin{tikzpicture}
	
	\draw (0.025,0) -- (2.475,0);

	\draw (0,0) circle (.05);
	\draw[fill=black] (.5,0) circle (.05); 
	\draw[fill=black] (1,0) circle (.05);
	\draw[fill=black] (1.5,0) circle (.05);
	\draw[fill=black] (2,0) circle (.05);
	\draw (2.5,0) circle (.05);

	\node at (.75,0.175) {$4$};
	\node at (2.25,0.175) {$4$};
	
\end{tikzpicture}
  & $[3,4,3,3,4]$ & $\overline{R}_5$ & $7 \zeta(3)/4608$ & $0.59421\dots$  \\
  
    \begin{tikzpicture}

	\draw (.5,.25) -- (.5,-.25);	
	\draw (.5,.25) -- (1,.25);
	\draw (.5,-.25) -- (1,-.25);
	\draw (1,.25) -- (1.5,.25);
	\draw (1,-.25) -- (1.5,-.25);
	\draw (1.5,.25) -- (1.5,-.25);
	
	\draw[fill=black] (.5,.25) circle (.05); 
	\draw[fill=black] (.5,-.25) circle (.05); 
	\draw[fill=black] (1,.25) circle (.05);
	\draw[fill=black] (1,-.25) circle (.05);
	\draw (1.5,0.25) circle (.05);
	\draw (1.5,-0.25) circle (.05);
	
	\node at (.375,0) {$4$};
	
\end{tikzpicture}
  & $[(3^5,4)]$ & $\widehat{AU}_5$ & $0.0075726186$ & $0.50108\dots$ \\

\hline
    Triply Asymptotic & ~ & ~ & ~ & ~\\
\hline
    \begin{tikzpicture}
	
	\draw (0,0) -- (2,0);
	\draw (1,0) -- (1,.5);

	\draw (0,0) circle (.05);
	\draw[fill=black] (.5,0) circle (.05); 
	\draw[fill=black] (1,0) circle (.05);
	\draw[fill=black] (1.5,0) circle (.05);
	\draw (2,0) circle (.05);
	\draw (1,.5) circle (.05);

	\node at (.25,0.175) {$4$};
	\node at (1.75,0.175) {$4$};

    \end{tikzpicture}
  & $[4,3,_{3}^{3,4}]$ & $\overline{N}_5$ & $7 \zeta(3)/1536$ & $0.59421\dots$  \\\begin{tikzpicture}
	
	\draw (-.5,0) -- (1,0);
	\draw (1,0) -- (1.5,0.25);
	\draw (1,0) -- (1.5,-0.25);

	\draw (-.5,0) circle (.05);	
	\draw[fill=black] (0,0) circle (.05);
	\draw[fill=black] (0.5,0) circle (.05);
	\draw[fill=black] (1,0) circle (.05);
	\draw (1.5,0.25) circle (.05);
	\draw (1.5,-0.25) circle (.05);
	
	\node at (.25,0.25) {$4$};
	
\end{tikzpicture}
  & $[3,4,3,3^{1,1}]$ & $\overline{O}_5$ & $7 \zeta(3)/2304$ & $0.59421\dots$  \\
  
\hline
    4-asymptotic & ~ & ~ & ~ & ~\\
\hline
    \begin{tikzpicture}
	
	\draw (0,0) -- (1.5,0);
	\draw (1,-.5) -- (1,.5);

	\draw (0,0) circle (.05);
	\draw[fill=black] (.5,0) circle (.05); 
	\draw[fill=black] (1,0) circle (.05);
	\draw (1.5,0) circle (.05);
	\draw (1,.5) circle (.05);
	\draw (1,-.5) circle (.05);
	
	\node at (.25,0.25) {$4$};

    \end{tikzpicture}
  & $[4,3,3^{1,1,1}]$ & $\overline{M}_5$ & $7 \zeta(3)/768$ & $0.59421\dots$  \\
\hline
    5-asymptotic & ~ & ~ & ~ & ~\\
\hline
    \begin{tikzpicture}

	\draw (0,0) -- (0,0.5);
	\draw (0,0) -- (-0.29,-0.4);	
	\draw (0,0) -- (0.5,0.15);	
	\draw (0,0) -- (-0.475,0.15);	
	\draw (0,0) -- (0.29,-0.4);	

	\draw[fill=black] (0,0) circle (.05);	
	\draw (0,0.5) circle (.05);
	\draw (-0.29,-0.4) circle (.05);
	\draw (0.5,0.15) circle (.05);
	\draw (-0.475,0.15) circle (.05);
	\draw (0.29,-0.4) circle (.05);

\end{tikzpicture}
  & $[3^{1,1,1,1,1}]$ & $\overline{L}_5$ & $7 \zeta(3)/384$ & $0.59421\dots$  \\
\hline
    Totally Asymptotic (6-asymptotic) & ~ & ~ & ~ & ~\\
\hline
    \begin{tikzpicture}

	\draw (.5,.25) -- (.5,-.25);	
	\draw (.5,.25) -- (1,.25);
	\draw (.5,-.25) -- (1,-.25);
	\draw (1,.25) -- (1.5,.25);
	\draw (1,-.25) -- (1.5,-.25);
	\draw (1.5,.25) -- (1.5,-.25);
	
	\draw (.5,.25) circle (.05); 
	\draw (.5,-.25) circle (.05); 
	\draw (1,.25) circle (.05);
	\draw (1,-.25) circle (.05);
	\draw (1.5,0.25) circle (.05);
	\draw (1.5,-0.25) circle (.05);
	
	\node at (.375,0) {$4$};
	\node at (1.625,0) {$4$};
	
\end{tikzpicture}

  & $[(3^2,4)^{[2]}]$ & $\widehat{UR}_5$ & $7 \zeta(3)/288$ & $0.59421\dots$  \\
\hline

    \end{tabular}%
    }
    \caption{Notation and volumes for the twelve asymptotic Coxeter Simplices in $\mathbb{H}^5$, in the Coxeter diagram empty circles denote reflection planes opposite an ideal vertex.}
    \label{table:simplex_list}
\end{table}

\begin{figure}
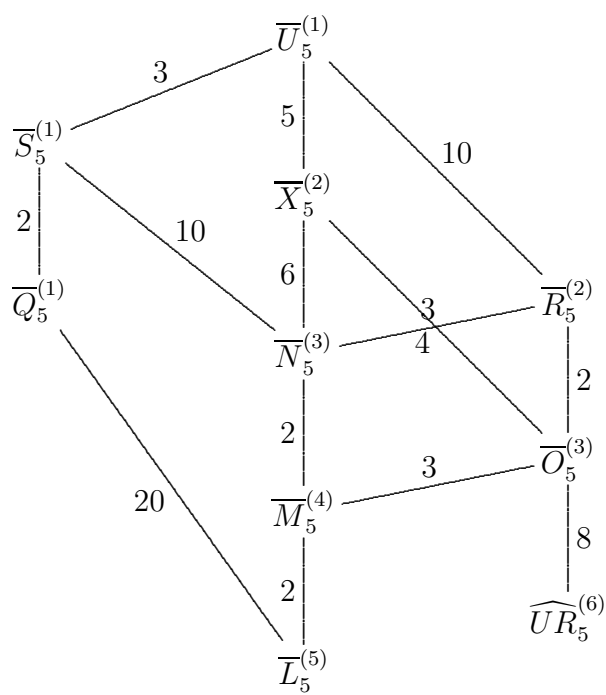

\begindc{\commdiag}[200]
\obj(5,10)[uu]{$\overline{U}_5^{(1)}$}
\obj(0,8)[ss]{$\overline{S}_5^{(1)}$}
\obj(0,5)[qq]{$\overline{Q}_5^{(1)}$}
\obj(5,7)[xx]{$\overline{X}_5^{(2)}$}
\obj(5,4)[nn]{$\overline{N}_5^{(3)}$}
\obj(5,1)[mm]{$\overline{M}_5^{(4)}$}
\obj(5,-2)[ll]{$\overline{L}_5^{(5)}$}
\obj(10,5)[rr]{$\overline{R}_5^{(2)}$}
\obj(10,2)[oo]{$\overline{O}_5^{(3)}$}
\obj(10,-1)[ur]{$\widehat{UR}_5^{(6)}$}
\mor{uu}{ss}{$3$}[\atright, \solidline]
\mor{ss}{qq}{$2$}[\atright, \solidline]
\mor{uu}{xx}{$5$}[\atright, \solidline]
\mor{xx}{nn}{$6$}[\atright, \solidline]
\mor{nn}{mm}{$2$}[\atright, \solidline]
\mor{mm}{ll}{$2$}[\atright, \solidline]
\mor{uu}{rr}{$10$}[\atleft, \solidline]
\mor{rr}{oo}{$2$}[\atleft, \solidline]
\mor{oo}{ur}{$8$}[\atleft, \solidline]
\mor{ss}{nn}{$10$}[\atleft, \solidline]
\mor{qq}{ll}{$20$}[\atright, \solidline]
\mor{rr}{nn}{$3$}[\atright, \solidline]
\mor{oo}{mm}{$3$}[\atright, \solidline]
\mor{xx}{oo}{$4$}[\atright, \solidline]
\enddc
\caption{Lattice of subgroups of the ten commensurable cocompact Coxeter groups in $\mathbb{H}^5$. The number in the superscript indicates number of ideal vertices of the fundamental simplex.}
\label{fig:lattice_of_subgroups}
\end{figure}

%%%%%%%%%%%%%%%%%%%%%%%%%%%%%%%%%%%%%%%%%%%
\subsection{Simply Asymptotic Cases}
%%%%%%%%%%%%%%%%%%%%%%%%%%%%%%%%%%%%%%%%%%%

We next give an example of the computation of the optimal horoball packing density through the Coxeter simplex tiling $\overline{U}_5$, for the other simply asymptotic cases $\overline{S}_5, \overline{Q}_5$ and $\overline{P}_5$  we apply the same method.

\begin{proposition}
\label{proposition:s4}
The optimal horoball packing density for simply asymptotic Coxeter simplex tiling $\cT_{{\overline{U}}_5}$ is $\delta_{opt}({\overline{U}}_5)
= 0.59421\dots$.
\end{proposition}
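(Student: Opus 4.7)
The plan is to exploit the fact that $\overline{U}_5$ is \emph{simply} asymptotic, so that only one horoball appears per fundamental simplex and the optimization reduces to a one-parameter problem. The density computation then splits into a geometric step (locate the maximal admissible horoball) and a volume step (use Bolyai's horospheric formula).

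First I would set up the projective Cayley--Klein model from Section~2, introducing a basis $\mathbf{a}_0,\dots,\mathbf{a}_5$ for which the ideal vertex $A_0$ of $\cF_{\cT}$ sits at $(1,0,\dots,0,1)$, and then compute the remaining five vertices of the $[3,3,3,4,3]$-simplex by solving the system obtained from the Gram matrix $G(P)=(\langle \mathbf{b}^i,\mathbf{b}^j\rangle)$ encoded by the Coxeter diagram (off-diagonal entries are $0$, $-1/2$, or $-\cos(\pi/4)=-\sqrt{2}/2$ according to the labels). Each facet normal $\mathbf{b}^i$ is read off from the Gram relations, and the vertices are obtained by polarity as common zeros of five of the $\mathbf{b}^i$. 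The one opposite $A_0$ is the single ideal vertex's ``opposite facet'' $H^0$, whose unit normal $\mathbf{b}^0$ governs the horoball size constraint.

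Next, I would parametrize the horoball $B(s)$ centered at $A_0$ through the point $S=(1,0,\dots,0,s)$ by equation~\eqref{eqn:horosphere}, and determine the largest $s$ such that $B(s)$ stays inside the fundamental simplex. Because the only non-vertex facet that can obstruct is the one opposite $A_0$, the tangency condition is $B(s)\cap H^0\neq\emptyset$ with exactly one intersection point; equivalently, the Euclidean distance from the center of $B(s)$ to the hyperplane $H^0$ equals its radius, which in projective form becomes a single polynomial condition in $s$ that I would solve explicitly. This fixes the unique optimal horoball, since the conditions (no facet crossing and invariance under $\Gamma_{\cT}$) both force maximality at this tangency.

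The volume computation then proceeds by (\ref{eq:bolyai}): the Euclidean $4$-dimensional cross-section $A = B(s)\cap\cF_{\cT}$ lying on the horosphere is a Euclidean simplex whose edge lengths come from the Bolyai arc-length formula $L(x)=2\sinh(x/2)$ applied to the chord segments from $A_0$ along the five edges of $\cF_{\cT}$ emanating from $A_0$. Using $\cosh d$ from~\eqref{prop_dist} for each such chord, I get the five horospheric edge lengths, from which a Cayley--Menger determinant yields the $4$-volume $\mathcal{A}$. Then $\mathrm{vol}(B\cap\cF_{\cT})=\mathcal{A}/4$ and the density is
\begin{equation}
\delta_{opt}(\overline{U}_5)=\frac{\mathrm{vol}(B\cap\cF_{\cT})}{\mathrm{vol}(\cF_{\cT})}=\frac{\mathcal{A}/4}{7\zeta(3)/46080}, \notag
\end{equation}
which I would simplify and verify numerically equals $0.59421\dots$. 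Monotonicity in $s$ of $\delta$ (so that tangency really is optimal, not just admissible) follows from the fact that enlarging the horoball strictly enlarges $\mathcal{A}$ while the denominator is fixed; any smaller $s$ gives a strictly smaller horoball, and any larger $s$ violates the packing condition.

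The main obstacle, I expect, is the bookkeeping of the projective coordinates: writing down $\mathbf{b}^0,\dots,\mathbf{b}^5$ from the Gram matrix with the $4$-label on the $[3,3,3,\mathbf{4},3]$ edge, inverting to get the vertex coordinates, and then extracting the chord lengths from $A_0$ to each finite vertex in a form clean enough that the Cayley--Menger determinant evaluates to a recognizable closed expression. Once the five chord lengths are in hand, the remaining algebra is routine, and the agreement of the result $7\zeta(3)/46080$ in the denominator with the numerator produced by $\mathcal{A}/4$ should yield the stated decimal $0.59421\dots$.
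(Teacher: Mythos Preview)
Your approach is essentially identical to the paper's: the same projective setup, the same maximal-horoball-via-tangency-to-the-opposite-facet criterion, and the same Bolyai formula plus Cayley--Menger determinant for the horoball sector volume.

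One point needs correcting. The horospheric $4$-simplex has vertices $H_1,\dots,H_5$, namely the intersections of the horosphere $\partial B(s)$ with the five edges $A_0A_i$; hence it has $\binom{5}{2}=10$ edges $H_iH_j$, not five. What enters the Cayley--Menger determinant are the ten horospheric lengths $L_{ij}=2\sinh\bigl(d(H_i,H_j)/2\bigr)$, obtained from the ten hyperbolic chord lengths $d(H_i,H_j)$ computed via \eqref{prop_dist}. The ``chord segments from $A_0$ along the five edges'' serve only to locate the $H_i$; since $A_0$ is ideal, lengths measured from $A_0$ are infinite and play no role in the volume formula. With that fix your computation goes through exactly as the paper does it.
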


\begin{proof}
Let $\cF_{{\overline{U}_5}}$ be the simplicial fundamental domain of Coxeter tiling $\cT_{\overline{U}_5}$, with vertices $A_0, A_1, \dots, A_5$. Their coordinates are determined by the angle requirements. 
Our choice of vertices, as well as forms (`roots') for hyperplanes $\Bu_i$ opposite to vertices $A_i$,  
are given in Table \ref{table:data_one}.
In order to maximize the packing density, we determine the largest horoball type $\mathcal{B}_0(s)$ centered at ideal vertex $A_0$ that is admissible in cell $\cF_{{\overline{U}}_5}$. This extremal horoball is a member of the 1-parameter family of horoballs centered at $A_0$ with type-parameter 
$s$ (intuitively this can be thought of as the ``radius" of the horoball, the minimal Euclidean signed distance to our choice of center for the model, negative if the horoball contains the model center) such that the horoball $\mathcal{B}_0(s)$ is tangent to 
the plane of the hyperface $\Bu_0$ bounding 
the fundamental simplex opposite of $A_0$. By a projection, the perpendicular foot $F_0(\mathbf{f}_0)$ of vertex $A_0$ on plane $\Bu_0$,
\begin{equation}
\mathbf{f}_0 =\ba_0 - \frac{\langle \ba_0, \mathbf{u}_0 \rangle}{\langle \Bu_0,\Bu_0 \rangle} 
\mathbf{u}_0 = \left(1,0,0,0,0\right)=A_1,
\end{equation}
is the point of tangency of horoball $\mathcal{B}_0(s)$ and hyperface $\Bu_0$ of the the simplex cell.

Plugging in for $F_0$ and solving the horosphere equation, we find that the horoball with type-parameter
 $s=0$ is the optimal type. 
The equation of horosphere $\partial \mathcal{B}_0 =\partial \mathcal{B}_0(0)$ centered at $A_0$ passing through $F_0$ is

\begin{equation}
2 \left(h_1^2+h_2^2+h_3^2+h_4^2\right)+4 \left(h_5-\frac{1}{2}\right)^2=1.
\end{equation}

\begin{figure}[h]
\begin{center}
\includegraphics[height=50mm]{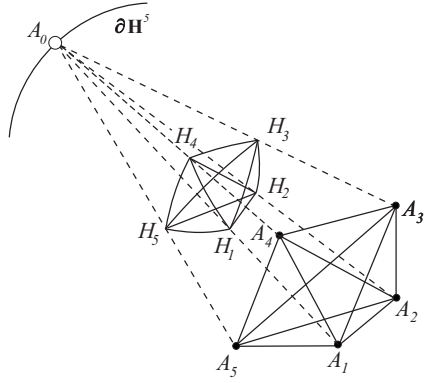}
\end{center}
\caption{Simply asymptotic horospheric 5-simplex on hyperface opposite $A_0$. Horoball $\mathcal{B}_0$ intersects the sides of the simplex at $H_i$.}
\label{fig:horospheric_triangle}
\end{figure} 

The intersections $H_i(\bh_i)$ of horosphere $\partial \mathcal{B}_0$ and simplex edges are found by parameterizing the 
simplex edges as $\bh_i(\lambda) = \lambda \ba_0 +\ba_i$ $(i=1,2,\dots,5)$, and computing their intersections with $\partial \mathcal{B}_0$. 
See Fig. \ref{fig:horospheric_triangle} and Table \ref{table:data_one} for the intersection points.
The volume of the horospherical 4-simplex determines the volume of the horoball piece by equation \eqref{eq:bolyai}.
In order to determine the data of the horospheric 4-simplex, we compute the hyperbolic distances $l_{ij}$ by the equation \ref{prop_dist},
$l_{ij} = d(H_i, H_j)$ where $d(\bh_i,\bh_j)= \arccos\left(\frac{-\langle \bh_i, \bh_j 
\rangle}{\sqrt{\langle \bh_i, \bh_i \rangle \langle \bh_j, \bh_j \rangle}}\right)$.
Moreover, the horospherical distances $L_{ij}$ can be calculated by formula \eqref{eq:horo_dist}.
The intrinsic geometry of the horosphere is Euclidean, so we use the 
Cayley-Menger determinant to find the volume $\mathcal{A}$ of the horospheric 4-simplex $\mathcal{A}$,

\begin{equation}
\mathcal{A} = \frac{1}{9216}
\begin{vmatrix}
 0 & 1 & 1 & 1 & 1 & 1\\
 1 & 0 & L_{12}^2 & L_{13}^2 & L_{14}^2 & L_{15}^2\\
 1 & L_{12}^2 & 0 & L_{23}^2 & L_{24}^2 & L_{25}^2\\
 1 & L_{13}^2 & L_{23}^2 & 0 & L_{34}^2 & L_{35}^2\\
 1 & L_{14}^2 & L_{24}^2 & L_{34}^2 & 0 & L_{45}^2\\
  1 & L_{15}^2 & L_{25}^2 & L_{35}^2 & L_{45}^2 & 0
 \end{vmatrix} = 0.00043\dots
 \end{equation}

The volume of the optimal horoball piece contained in the fundamental simplex is

\begin{equation}
vol(\mathcal{B}_0 \cap \cF_{\overline{U}_5}) = \frac{1}{n-1}\mathcal{A} = \frac{1}{4} \cdot 0.00043\dots = 0.00010\dots
\end{equation}

Hence by the Coxeter group $\Gamma_{\overline{U}_5}$ the optimal horoball packing density of the Coxeter Simplex tiling $\cT_{\overline{U}_5}$ becomes

\begin{equation}
\delta_{opt}(\overline{U}_5) = \frac{vol(\mathcal{B}_0 \cap \cF_{\overline{U}_5})}{vol(\cF_{\overline{U}_5})}\\
    = \frac{0.00010\dots}{7 \zeta(3)/46080} \\
    = 0.59421\dots 
\end{equation}
\end{proof}

The same method is used to find the optimal packing density of the remaining simply asymptotic Coxeter simplex tilings. Results of the computations are given  
in Table \ref{table:data_one}. We summarize the results:

\begin{theorem}
The optimal horoball packing density for the arithmetic simply asymptotic Coxeter simplex tiling $\cT_\Gamma$, $\Gamma \in \Big\{ \overline{U}_5, 
\overline{S}_5, \overline{Q}_5 \Big\}$ is $\delta_{opt}(\Gamma) = 0.59421\dots$, and in the tiling corresponding to the non-commensurable arithmetic Coxeter simplex group $\cT_{\overline{P}_5}$ it is $\delta_{opt}(\Gamma) = 0.56151\dots$.
\end{theorem}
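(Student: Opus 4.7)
The plan is to apply the method of Proposition \ref{proposition:s4} verbatim to each of the three remaining simply asymptotic tilings $\overline{S}_5$, $\overline{Q}_5$, and $\overline{P}_5$, verifying that the first two reproduce the value $0.59421\dots$ obtained for $\overline{U}_5$, while $\overline{P}_5$ yields the distinct value $0.56151\dots$. For each group $\Gamma$, we first fix a projective coordinate frame placing the unique ideal vertex $A_0$ of $\cF_\Gamma$ (the vertex corresponding to the empty circle in each Coxeter diagram of Table \ref{table:simplex_list}) at $(1,0,\dots,0,1)$, then solve the Gram-matrix constraints imposed by the Coxeter scheme to determine the coordinates of $A_1,\dots,A_5$ and the polar forms $\Bu_i$ of the opposite hyperfaces.

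With coordinates in hand, we determine the extremal horoball $\mathcal{B}_0$ at $A_0$ by computing the perpendicular foot $F_0$ of $A_0$ onto $\Bu_0$ via the projection formula, substituting $F_0$ into the one-parameter family \eqref{eqn:horosphere} to solve for the correct type parameter $s$, and writing down the explicit horosphere equation. The horoball-edge intersection points $H_i$ for $i=1,\dots,5$ follow by parametrizing each simplex edge $[A_0 A_i]$ as $\lambda \ba_0 + \ba_i$ and solving a quadratic; from these we evaluate the hyperbolic distances $l_{ij}=d(H_i,H_j)$ via \eqref{prop_dist}, convert to horospheric distances $L_{ij}=2\sinh(l_{ij}/2)$ by \eqref{eq:horo_dist}, and assemble the $L_{ij}$ into the $6 \times 6$ Cayley--Menger determinant to obtain the horospheric $4$-simplex area $\mathcal{A}$. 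Bolyai's formula \eqref{eq:bolyai} yields $vol(\mathcal{B}_0 \cap \cF_\Gamma) = \mathcal{A}/4$, and division by the simplex volume recorded in Table \ref{table:simplex_list} gives $\delta_{opt}(\Gamma)$.

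The uniformity of the answer across $\overline{U}_5, \overline{S}_5, \overline{Q}_5$ admits a structural explanation worth recording: Figure \ref{fig:lattice_of_subgroups} shows these three groups belong to a single commensurability class, with simplex volumes in integer ratios $1:3:6$. An optimal horoball packing invariant under any one of them is automatically invariant under all, and its asymptotic density on $\mathbb{H}^5$ is intrinsic, independent of which fundamental domain is used to compute it; so the three separate computations serve mainly as mutual consistency checks. For $\overline{P}_5$ the volume expression involves the Dirichlet $L$-value $L(3,5)$ rather than $\zeta(3)$, placing it in a distinct commensurability class, and a genuinely separate computation is required.

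The main obstacle is computational rather than conceptual: for each case one must select a coordinate system adapted to the specific Coxeter diagram, particularly for $\overline{P}_5$ whose branching pattern breaks the linear chain structure of the other three and demands a substantially different vertex layout, and then carry out exact symbolic arithmetic through the $6\times 6$ Cayley--Menger determinant. The optimality argument of Proposition \ref{proposition:s4} transfers without modification: since each fundamental simplex has exactly one asymptotic vertex, multi-type horoball configurations are unavailable, and expansion until tangency with the opposite face $\Bu_0$ is the unique packing-extremal condition.
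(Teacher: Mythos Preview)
Your proposal is correct and follows essentially the same approach as the paper: apply the computational method of Proposition \ref{proposition:s4} case by case to $\overline{S}_5$, $\overline{Q}_5$, and $\overline{P}_5$, with the explicit coordinate data and intermediate results recorded in Table \ref{table:data_one}. Your additional remark on commensurability (via Figure \ref{fig:lattice_of_subgroups}) explaining the coincidence of densities for $\overline{U}_5$, $\overline{S}_5$, $\overline{Q}_5$ is a helpful observation the paper does not make explicit at this point, though note the implication runs only one way---invariance under the supergroup $\overline{U}_5$ guarantees invariance under its subgroups, not conversely---so the separate computations are still needed to confirm that the optimal horoball for each subgroup's simplex is indeed the one inherited from $\overline{U}_5$.
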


\begin{table}[h!]
\resizebox{\columnwidth}{!}{%
	\begin{tabular}{|l|l|l|l|l|}
		 \hline
		 \multicolumn{5}{|c|}{{\bf Coxeter Simplex Tilings} }\\
		\hline
		 Witt Symb. & $\overline{U}_5$ &  $\overline{S}_5$ &  $\overline{Q}_5$ &  $\overline{P}_5$ \\
		 \hline
		 \multicolumn{5}{|c|}{{\bf Vertices of Simplex} }\\
		 \hline
		 $A_0$ & $(1, 0, 0, 0, 0, 1)*$ & $(1, 0, 0, 0, 0, 1)*$ & $(1, 0, 0, 0, 0, 1)*$ & $(1, 0, 0, 0, 0, 1)*$  \\
		 $A_1$ & $(1, 0, 0, 0, 0, 0)$ &$(1,0,0,0,0,0)$&$(1,0,0,0,0,0)$&$(1,0,0,0,0,0)$  \\
		 $A_2$ & $(1,\frac{1}{2},0,0,0,0)$ & $(1,0,0,0,\frac{1}{\sqrt{2}},0)$ & $(1,0,0,0,-\frac{1}{2},0)$ & $(1,0,0,0,\sqrt{\frac{2}{5}},0)$   \\
		 $A_3$ & $(1,\frac{1}{2},\frac{\sqrt{3}}{6},0,0,0)$ & $(1,0,0,\frac{\sqrt{6}}{8},\frac{3 \sqrt{2}}{8},0)$ & $(1,0,0,\frac{1}{2},-\frac{1}{2},0)$ & $(1,0,0,\frac{\sqrt{6}}{4},\frac{3}{2 \sqrt{10}},0)$  \\
		 $A_4$ & $(1,\frac{1}{2},\frac{\sqrt{3}}{6},\frac{\sqrt{6}}{12},0,0)$ & $(1,0,\frac{\sqrt{3}}{6},\frac{\sqrt{6}}{12},\frac{\sqrt{2}}{4},0)$ & $(1,0,-\frac{1}{2},0,-\frac{1}{2},0)$ & $(1,0,\frac{1}{\sqrt{3}},\frac{1}{\sqrt{6}},\frac{1}{\sqrt{10}},0)$ \\
		 $A_5$ & $(1,\frac{1}{2},\frac{\sqrt{3}}{6},\frac{\sqrt{6}}{12},\frac{\sqrt{2}}{4},0)$ & $(1,\frac{1}{2},\frac{\sqrt{3}}{6},\frac{\sqrt{6}}{12},\frac{\sqrt{2}}{4},0)$ & $(1,-\frac{1}{2},0,0,-\frac{1}{2},0)$ & $(1,\frac{1}{2},\frac{1}{2 \sqrt{3}},\frac{1}{2 \sqrt{6}},\frac{1}{2 \sqrt{10}},0)$ \\
		 \hline
		 \multicolumn{5}{|c|}{{\bf The form $\mbox{\boldmath$u$}_i$ of sides opposite $A_i$ }}\\
		\hline
		 $\mbox{\boldmath$u$}_0$ & $(0, 0, 0, 0, 0, 1)^T$ & $(0,0,0,0,0,1)^T$ & $(0,0,0,0,0,1)^T$ & $(0,0,0,0,0,1)^T$\\
		 $\mbox{\boldmath$u$}_1$ & $(1, -2, 0, 0, 0, -1)^T$ & $(1,0,-\frac{2}{\sqrt{3}},-\frac{\sqrt{6}}{3},-\sqrt{2},-1)^T$ & $(1,0,0,0,2,-1)^T$ & $(1,-1,-\frac{1}{\sqrt{3}},-\frac{1}{\sqrt{6}},-\sqrt{\frac{5}{2}},-1)^T$ \\
		 $\mbox{\boldmath$u$}_2$ & $(0,-\sqrt{\frac{2}{3}},1,0,0,0)^T$ & $(0,-\frac{\sqrt{3}}{3},1,0,0,0)^T$ & $(0,1,1,-1,-1,0)^T$ & $(0,0,0,-1,\sqrt{\frac{5}{3}},0)^T$ \\
		 $\mbox{\boldmath$u$}_3$ & $(0,0,1,-\frac{1}{\sqrt{2}},0,0)^T$ & $(0,0,-\frac{1}{\sqrt{2}},1,0,0)^T$ & $(0,0,0,1,0,0)^T$ & $(0,0,-\frac{1}{\sqrt{2}},1,0,0)^T$ \\
		 $\mbox{\boldmath$u$}_4$ & $(0,0,0,-\sqrt{3},1,0)^T$ & $(0,0,0,-\sqrt{3},1,0)^T$ & $(0,0,-1,0,0,0)^T$ & $(0,-\frac{1}{\sqrt{3}},1,0,0,0)^T$ \\
		 $\mbox{\boldmath$u$}_5$ & $(0, 0, 0, 0, 1, 0)^T$ & $(0,1,0,0,0,0)^T$ & $(0, -1, 0, 0, 0, 0)^T$ & $(0, 1, 0, 0, 0, 0)^T$  \\
		 \hline
		 \multicolumn{5}{|c|}{{\bf Maximal horoball parameter $s_0$ }}\\
		\hline
		 $s_0$ & $0$ & $0$ & $0$ & $0$ \\
		\hline
		 \multicolumn{5}{|c|}{ {\bf Intersections $H_i = \mathcal{B}(A_0,s_0)\cap A_0A_i$ of horoballs with simplex edges}}\\
		\hline
		 $H_1$ & $(1,0,0,0,0,0)$ & $(1,0,0,0,0,0)$ & $(1,0,0,0,0,0)$ & $(1,0,0,0,0,0)$ \\
		 $H_2$ & $(1,\frac{4}{9},0,0,0,\frac{1}{9})$ & $(1,0,0,0,\frac{2 \sqrt{2}}{5},\frac{1}{5})$ & $(1,0,0,0,-\frac{4}{9},\frac{1}{9})$ & $(1,0,0,0,\frac{1}{3}\sqrt{\frac{5}{2}},\frac{1}{6})$ \\
		 $H_3$ & $(1,\frac{3}{7},\frac{\sqrt{3}}{7},0,0,\frac{1}{7})$ & $(1,0,0,\frac{2 \sqrt{6}}{19},\frac{6 \sqrt{2}}{19},\frac{3}{19})$ & $(1,0,0,\frac{2}{5},-\frac{2}{5},\frac{1}{5})$ & $(1,0,0,\frac{5}{13}\sqrt{\frac{3}{2}},\frac{3}{13}\sqrt{\frac{5}{2}},\frac{3}{13})$  \\
		 $H_4$ & $(1,\frac{8}{19},\frac{8}{19 \sqrt{3}},\frac{4}{19}\sqrt{\frac{2}{3}},0,\frac{3}{19})$ & $(1,0,\frac{4}{9 \sqrt{3}},\frac{2}{9}\sqrt{\frac{2}{3}},\frac{2 \sqrt{2}}{9},\frac{1}{9})$ & $(1,0,-\frac{2}{5},0,-\frac{2}{5},\frac{1}{5})$ & $(1,0,\frac{10}{13 \sqrt{3}},\frac{5 }{13} \sqrt{\frac{2}{3}},\frac{\sqrt{10}}{13},\frac{3}{13})$ \\
		 $H_5$ & $(1,\frac{2}{5},\frac{2}{5 \sqrt{3}},\frac{1}{5}\sqrt{\frac{2}{3}},\frac{\sqrt{2}}{5},\frac{1}{5})$ & $(1,\frac{2}{5},\frac{2}{5 \sqrt{3}},\frac{1}{5}\sqrt{\frac{2}{3}},\frac{\sqrt{2}}{5},\frac{1}{5})$ & $(1,-\frac{2}{5},0,0,-\frac{2}{5},\frac{1}{5})$ & $(1,\frac{5}{12},\frac{5}{12 \sqrt{3}},\frac{5}{12 \sqrt{6}},\frac{1}{12}\sqrt{\frac{5}{2}},\frac{1}{6})$  \\
		 \hline
		 \multicolumn{5}{|c|}{ {\bf Volume of maximal horoball piece }}\\
		\hline
		 $vol(\mathcal{B}_0 \cap \mathcal{F})$ & $0.00010\dots$ & $0.00032\dots$ & $0.00065\dots$ & $0.00116\dots$ \\
		\hline
		\multicolumn{5}{|c|}{ {\bf Optimal Packing Density}}\\
		\hline
		 $\delta_{opt}$ & $0.59421\dots$ & $0.59421\dots$ & $0.59421\dots$ & $0.56151\dots$  \\
		\hline
	\end{tabular}%
}
	\caption{Data for simply asymptotic Tilings in Cayley-Klein ball model of radius 1 centered at (1,0,0,0,0,0,0)}
	\label{table:data_one}
\end{table}

%%%%%%%%%%%%%%%%%%%%%%%%%%%%%%%%%%%%%%%%%%%
\subsection{Multiply Asymptotic Cases}
%%%%%%%%%%%%%%%%%%%%%%%%%%%%%%%%%%%%%%%%%%%

%%% Good till here!!

Eight Coxeter simplices have 
multiple asymptotic vertices in $\mathbb{H}^5$, see Table \ref{table:simplex_list}. In this subsection we develop a method to
find the optimal horoball packing configurations and their densities relative to 
the Coxeter tilings arising from the above simplices, when we place a 
horoball at each asymptotic vertex. 

The general horosphere equations we shall use for horospheres centered at arbitrary asymptotic vertices of such Coxeter simplices, are obtained by applying a rotation, by an element of $SO(1,5)$, to move into desired position the horosphere family centered at 
$(1,0,0,0,0,1)$ given by equation (5), we omit the details.

The main steps we follow to find the optimal horoballs configuration are:
\begin{enumerate}
\item First find bounds for the largest possible
extremal horoball type admissible at each asymptotic vertex, as in the simply asymptotic case. 
Such a horoball is tangent to the facet opposite its center, an asymptotic vertex of Coxeter 
simplex. In each case, we have at most six different extremal-type configurations depending on the 
number of the asymptotic vertices. 
\item We consider a maximal-type configuration, that is a case where one horoball is of extremal-
type (i.e. it is tangent to the opposite $4$-dimensional hyperplane). 
Then we blow up the size (type) of the remaining horoballs at the other cusps 
until they are mutually tangent or inadmissible.
\item Next we continuously ''shrink" the extremal-type horoball while simultaneously blowing up it's neighbors, and examine at each 
stage the possible locally optimal horoball configurations and their densities, i.e.
we vary the types of the horoballs within the allowable range to find the optimal 
packing density. We must keep in mind which horoballs are adjacent, and at what stages this contact structure changes. 
\item We study all possible cases and compare the finitely many locally 
optimal configurations (by Lemma \ref{lemma:szirmai}), compare their densities. Finally, we obtain the globally optimal horoball 
arrangement for a given tiling.
\end{enumerate}
The intricacy of this process for a given Coxeter tiling depends highly on the 
structure of the corresponding Coxeter simplex, its symmetries and the number of asymptotic vertices.

In what follows, we classify the possible cases and describe the above procedure according to 
the number of asymptotic vertices lying on the absolute quadric. In some cases it is possible 
to determine the optimal horoball arrangements by the symmetry properties 
of the given tiling and reducing to known optimal horoball arrangements.   
%
%%%%%%%%%%%%%%%%%%%%%%%%%%%%%%%%%%%%%%%%%%%
\subsubsection{Doubly Asymptotic cases} 
%%%%%%%%%%%%%%%%%%%%%%%%%%%%%%%%%%%%%%%%%%%

The fundamental simplices (Coxeter simplices) of the tilings 
$\overline{X}_5$, $\overline{R}_5$ and $\widehat{AU}_5$ 
each have two asymptotic vertices. 

The following lemma first proved in \cite{Sz12} gives the relationship between 
the volumes of two tangent horoball pieces 
centered at vertices of a tiling as we continuously vary their type.
If the volumes of the horoball pieces in one starting position are known,
then this lemma determines the sum of their volumes 
in all ``intermediate cases".

Let $\tau_1$ and $\tau_2$ be two congruent
$n$-dimensional convex cones with vertices at $C_1$ and $C_2$ that share common edge $\overline{C_1C_2}$.
Let $\mathcal{B}_1(x)$ and $\mathcal{B}_2(x)$ denote two horoballs centered at $C_1$ and
$C_2$ tangent at point
$I(x)\in {\overline{C_1C_2}}$. Define the point of tangency $I(0)$ (the ``midpoint") such that the
equality $V(0) = 2 vol(\mathcal{B}_1(0) \cap \tau_1) = 2 vol(\mathcal{B}_2(0) \cap \tau_2)$
holds for the volumes of the horoball sectors.

\begin{lemma}[\cite{Sz12}]
\label{lemma:szirmai}
Let $x$ be the hyperbolic distance between $I(0)$ and $I(x)$,
then
\begin{equation}
V(x) = vol(\mathcal{B}_1(x) \cap \tau_1) + vol(\mathcal{B}_2(x) \cap \tau_2) = \frac{V(0)}{2}\left( e^{(n-1)x}+e^{-(n-1)x}\right) \notag
\end{equation}
strictly increases as $x\rightarrow\pm\infty$.
\end{lemma}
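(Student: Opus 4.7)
The plan is to exploit the exponential scaling behavior of horoball-cap volumes under radial translation of the bounding horosphere. The key observation is that any two horospheres sharing a common ideal center differ by a hyperbolic translation along their common family of radial geodesics, and such a translation rescales intrinsic (Euclidean) horospheric distances by $e^{\pm d}$, hence rescales $(n{-}1)$-dimensional horospheric volumes by $e^{\pm(n-1)d}$, where $d$ is the translation amount.

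First, I would verify this scaling in a convenient model. Working in the Poincar\'e upper half-space model with $C_1$ placed at the ideal point $\infty$, horospheres centered at $C_1$ become horizontal hyperplanes $\{y=c\}$, and the cone $\tau_1$ becomes a vertical cylinder over a bounded base $S \subset \mathbb{R}^{n-1}$. Direct integration with the hyperbolic volume form $y^{-n}\,d\mathbf{x}\,dy$ yields
\begin{equation}
\int_c^{\infty}\!\!\int_S \frac{d\mathbf{x}\,dy}{y^n} \;=\; \frac{\text{Area}(S)}{(n-1)\,c^{n-1}}, \notag
\end{equation}
which agrees with Bolyai's formula \eqref{eq:bolyai} applied to the horospheric base of intrinsic area $\text{Area}(S)/c^{n-1}$. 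Because the hyperbolic distance between $\{y=c\}$ and $\{y=c'\}$ along a vertical geodesic is $|\log(c'/c)|$, enlarging $\mathcal{B}_1$ by pushing its bounding horosphere outward a hyperbolic distance $x$ multiplies the cap volume by $e^{(n-1)x}$.

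Second, I would apply this scaling simultaneously to both horoballs. The geodesic $\overline{C_1C_2}$ is orthogonal to every horosphere centered at either of its endpoints, so its arc-length parameter measures radial distance from both $C_1$ and $C_2$. Hence shifting the tangency point from $I(0)$ to $I(x)$ enlarges $\mathcal{B}_1(x)$ by exactly $x$ in the radial direction from $C_1$ while shrinking $\mathcal{B}_2(x)$ by exactly $x$ in the radial direction from $C_2$. Using the midpoint normalization $vol(\mathcal{B}_i(0)\cap\tau_i) = V(0)/2$, I obtain
\begin{equation}
V(x) \;=\; \frac{V(0)}{2}\,e^{(n-1)x} + \frac{V(0)}{2}\,e^{-(n-1)x} \;=\; V(0)\cosh\!\bigl((n-1)x\bigr). \notag
\end{equation}

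Strict monotonicity as $|x|\to\infty$ is then elementary: $\cosh((n-1)x)$ is a strictly convex even function whose derivative $(n-1)\sinh((n-1)x)$ has the same sign as $x$, so $V(x)$ is strictly decreasing on $(-\infty,0]$ and strictly increasing on $[0,\infty)$. The only subtle point in the whole argument is the scaling factor $e^{(n-1)x}$ for horoball cap volumes; once it is pinned down in any model, the rest reduces to bookkeeping, and the upper half-space presentation makes it essentially a one-line computation.
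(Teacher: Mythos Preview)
Your argument is correct. The paper does not actually supply a proof of this lemma; it is quoted from \cite{Sz12} and used as a black box, so there is no in-paper proof to compare against. That said, your upper-half-space computation is exactly the standard way to establish the exponential scaling $vol(\mathcal{B}(x)\cap\tau)=e^{(n-1)x}\,vol(\mathcal{B}(0)\cap\tau)$: placing the ideal center at $\infty$ turns the cone into a vertical cylinder and reduces everything to the elementary integral you wrote down. Once that scaling is in hand, the identity $V(x)=V(0)\cosh((n-1)x)$ and its monotonicity in $|x|$ are immediate, just as you say.
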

\begin{proposition}
The optimal horoball packing density for Coxeter simplex tiling $\cT_{\overline{X}_5}$ 
is $\delta_{opt}(\overline{X}_5) = 0.59421\dots$
\end{proposition}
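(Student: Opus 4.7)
The plan is to adapt the strategy of Proposition \ref{proposition:s4} and carry out the four-step procedure outlined above for doubly asymptotic tilings. First I would fix projective coordinates for $\cF_{\overline{X}_5}$: the palindromic diagram $[3,3,4,3,3]$ has two ideal vertices at its outer nodes, call them $A_0$ and $A_5$, and four proper vertices $A_1,\dots,A_4$. Their coordinates, and the forms $\Bu_i$ of the opposite hyperfaces, are forced by the dihedral-angle conditions exactly as in Section~3.1. Next I would introduce two one-parameter families of horoballs $\mathcal{B}_0(s_0)$ and $\mathcal{B}_5(s_5)$ centred at $A_0$ and $A_5$ via equation \eqref{eqn:horosphere} (composed with a suitable $SO(1,5)$ rotation for $\mathcal{B}_5$). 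For each family I would compute the extremal type $s_i^{\max}$ at which the horoball becomes tangent to its opposite facet; admissibility forces $s_0\le s_0^{\max}$ and $s_5\le s_5^{\max}$. A third boundary of the admissible region in the $(s_0,s_5)$-plane comes from the mutual-tangency curve between $\mathcal{B}_0$ and $\mathcal{B}_5$.

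Along any arc of mutually tangent pairs, Lemma~\ref{lemma:szirmai} gives the combined volume of the two horoball pieces as $V(x)=V(0)\cosh(4x)$ (the case $n=5$), where $x$ is the signed displacement of the tangency point from the equal-volume midpoint $I(0)$. Strict convexity in $x$ forces the maximum along such an arc to occur at an endpoint, i.e.\ either at $s_0=s_0^{\max}$, so that $\mathcal{B}_0$ is simultaneously tangent to the opposite facet and to $\mathcal{B}_5$, or, by symmetry, at $s_5=s_5^{\max}$. By the palindromic symmetry of the diagram swapping $A_0\leftrightarrow A_5$, these two endpoints are congruent and give the same density. Before accepting this as global, I would also check the alternative possibility that both horoballs can reach their extremal types without overlapping, in which case the joint maximum sits at the corner $(s_0^{\max},s_5^{\max})$ and is again canonical by the same symmetry.

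Having fixed the optimal types, I would compute $vol(\mathcal{B}_0\cap\cF_{\overline{X}_5})$ and $vol(\mathcal{B}_5\cap\cF_{\overline{X}_5})$ exactly by the recipe of Proposition~\ref{proposition:s4}: intersect each horosphere with the simplex edges to locate points $H_i$, convert chord lengths to horospheric arcs via \eqref{eq:horo_dist}, apply a Cayley--Menger determinant to obtain the $4$-dimensional Euclidean volume $\mathcal{A}$ of each horospheric simplex, and invoke Bolyai's formula \eqref{eq:bolyai}. Dividing by $vol(\cF_{\overline{X}_5})=7\zeta(3)/9216$ should yield $0.59421\dots$. As an independent sanity check, the commensurability diagram in Figure~\ref{fig:lattice_of_subgroups} records $[\Gamma_{\overline{U}_5}:\Gamma_{\overline{X}_5}]=5$, so the optimal $\Gamma_{\overline{U}_5}$-invariant packing of Proposition~\ref{proposition:s4} is automatically $\Gamma_{\overline{X}_5}$-invariant and realizes the same local density, providing both a lower bound and a cross-check on the numerical output.

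The main obstacle I anticipate is the book-keeping in the third step. With two cusps there are several candidate contact patterns: the two horoballs may first touch along the edge $\overline{A_0A_5}$, along a shared $2$-face common to their closures, or they may still be disjoint when one of them reaches its extremal type. Each such sub-case dictates a different binding constraint, and Lemma~\ref{lemma:szirmai} must be verified to apply in the stated form (tangency along a single edge of a pair of congruent cones) in each regime before the sub-cases can be collated into a single global optimum. The computation itself is routine, but the case analysis of how the admissible region meets the tangency locus is where care is needed.
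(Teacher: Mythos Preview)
Your proposal is correct and follows essentially the same approach as the paper: fix coordinates, determine the extremal horoball types at both ideal vertices, use Lemma~\ref{lemma:szirmai} to show the packing density along the mutual-tangency arc is convex so its maximum lies at an endpoint, and then compute that endpoint density via the Cayley--Menger/Bolyai recipe, obtaining $0.59421\dots$.

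Two minor differences worth noting. First, you invoke the palindromic symmetry of $[3,3,4,3,3]$ to identify the two endpoint configurations; the paper instead computes both extremal cases explicitly (finding $s_0=0,\,s_5=\tfrac{3}{5}$ and $s_0=\tfrac{1}{3},\,s_5=\tfrac{1}{3}$, with horoball-piece volumes $(0.00043\dots,\,0.00010\dots)$ swapping between the two) and verifies they give the same density---your symmetry shortcut is cleaner. Second, the paper writes the interpolating density as $\bigl(\tfrac{4}{5}e^{-4x}+\tfrac{1}{5}e^{4x}\bigr)\delta_{opt}$, starting from one endpoint rather than the midpoint, which is just a reparametrization of your $\cosh(4x)$ form; since the cones at $A_0$ and $A_5$ are congruent by the diagram symmetry, Lemma~\ref{lemma:szirmai} applies in its original form as you state. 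Your commensurability cross-check via $[\Gamma_{\overline{U}_5}:\Gamma_{\overline{X}_5}]=5$ is a nice addition not in the paper's proof. The case analysis you worry about in your final paragraph is not an issue here: with only two cusps, the tangency always occurs on the edge $\overline{A_0A_5}$, and the admissible region is simply the segment between the two extremal endpoints.
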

\begin{proof}
Assign coordinates to the fundamental domain $\cF_{\overline{X}_5}$ as in  
Table \ref{table:data_two}. The two asymptotic vertices are 
$A_0=(1,0,0,0,0,1)$, and $A_5=(1,\frac{1}{2},\frac{\sqrt{3}}{6},\frac{\sqrt{6}}{12},\frac{\sqrt{2}}{4},0)$. 

Let $\cF_{\overline{X}_5}$ denote the fundamental domain (Coxeter simplex) of Coxeter group
$\overline{X}_5$. 
Place two horoballs $\mathcal{B}_0\left( \arctanh s_0 \right)$ and $\mathcal{B}_5(\arctanh s_5 )$ 
that have parameters
$s_0$ and $s_5$, respectively, at $A_0$ and $A_5$.
Let $x_i = \arctanh s_i$ denote the hyperbolic distance to the center of the model 
$(1,0,0,0,0,0)$ to point $S_i=(1,0,0,0,0,s_i)$ for $i\in\{0,5\}$. 
If horoball $\mathcal{B}_0$ is of maximal type then its parameter $s_0=0$ and the horoball $\mathcal{B}_5$ when tangent has parameter $s_5=\frac{3}{5}$.
If horoball $\mathcal{B}_5$ is of maximal type then 
its parameter is $s_0=\frac{1}{3}$ and the corresponding tangent horoball $\mathcal{B}_0$ has  parameter $s_5=\frac{1}{3}$. $\mathcal{B}_0(\arctanh 0)$ and 
$\mathcal{B}_5(\arctanh\frac{1}{3})$ are tangent to hyperfaces $[\Bu_0]$ and 
$[\Bu_5]$ respectively. The packing densities the above two extremal horoball arrangements are equal to 
$\delta_{opt}$ (see Theorem 2):

\begin{align*}
\delta_{opt} &= \delta_{s_0 = 0, s_5= \frac{3}{5}}(\overline{X}_5) = \\
&= \frac{vol(\mathcal{B}_0(\arctanh 0 ) \cap \cF_{\overline{X}_5}) 
+ vol(\mathcal{B}_5(\arctanh\frac{3}{5}) \cap \cF_{\overline{X}_5})
}{vol(\cF_{\overline{X}_5})} \\
&= \delta_{s_0=\frac{1}{3},s_5=\frac{1}{3}}(\overline{X}_5) = \\
&= \frac{vol(\mathcal{B}_0(\arctanh\frac{1}{3}) \cap \cF_{\overline{X}_5}) 
+ vol(\mathcal{B}_5 (\arctanh\frac{1}{3}) \cap \cF_{\overline{X}_5})
}{vol(\cF_{\overline{X}_5})}\\
&= 0.59421\dots 
\end{align*}

Next we examine the family of horoball packing configurations between the above two maximal cases. 
Start from the horoball arrangement with parameters $s_0=0$ and $s_5=\frac{3}{5}$ where horoballs $\mathcal{B}_5(s_i)$ for $i\in\{0,5\}$ are tangent. 
Define volumes 
$V_0(x) = vol(\mathcal{B}_0(x) \cap \cF_{\overline{X}_5})$ and 
$V_5(x) = vol(\mathcal{B}_5(\arctanh \frac{3}{5}-x ) 
\cap \cF_{\overline{X}_5})$ where $x \in [0,\arctanh\frac{1}{3}]$.
Here $\arctanh \frac{1}{3}$ is the hyperbolic distance from 
$(1,0,0,0,0,0)$ to $(1,0,0,0,0,\frac{1}{3})$.

Using the formulas (2), (5), (6), (7) compute 
$V_0(0) = 0.00043\dots$ and $V_5(\arctanh \frac{3}{5}) = 0.00010\dots$. 
 
By a simple modification of Lemma \ref{lemma:szirmai},
\begin{equation}
V(x) = V_0(0) e^{-4 x}+ V_5\left(\arctanh\frac{3}{5}\right)e^{4x}.
\end{equation}
The densities of the intermediate cases between of the two 
extremal arrangements are given by
\begin{equation}
\begin{gathered}
\delta_{x}(\overline{X}_5) = \frac{vol(\mathcal{B}_0(x) \cap \cF_{\overline{X}_5}) 
+ vol(\mathcal{B}_5(\arctanh \frac{3}{5}-x ) \cap \cF_{\overline{X}_5})
}{vol(\cF_{\overline{X}_5})}=\\
= \left( \frac{4}{5} e^{-4x}+\frac{1}{5} e^{4x} \right) \delta_{opt},
\end{gathered}
\end{equation}
the maxima are attained at the endpoints of the interval domain
$[0,\arctanh \frac{1}{3}]$, see Fig \ref{fig:X5transition};
\begin{equation}
\begin{gathered}
\delta_{\arctanh\frac{1}{3}}(\overline{X}_5) = \left( \frac{4}{5} e^{-4 \arctanh\frac{1}{3}} + \frac{1}{5} e^{4 \arctanh\frac{1}{3}} \right) \delta_{opt} =\\
		= \left( \frac{4}{5} \left(\frac{1-1/3}{1+1/3}\right)^2+\frac{1}{5}\left(\frac{1+1/3}{1-1/3}\right)^2 \right)\delta_{opt}= \\
		= \left( \left(\frac{1}{2}\right)^2\frac{4}{5}+2^2\frac{1}{5} \right)\delta_{opt} = \left( \frac{1}{5}+\frac{4}{5}\right)\delta_{opt} = \delta_{opt}.
\end{gathered}
\end{equation}

\begin{figure}[h]
\begin{center}
\includegraphics[height=40mm]{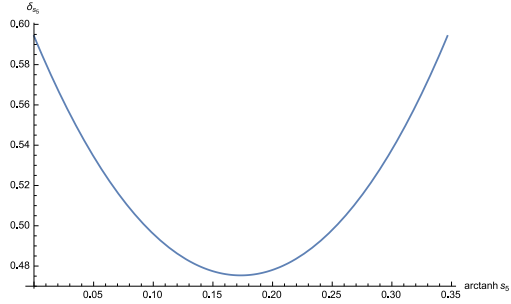}
\end{center}
\caption{The function $\delta_{s_5}$ as $s_5$ is varied in the admissible range, maxima are at the endpoints of the domain.}
\label{fig:X5transition}
\end{figure} 

The metric data, in coordinates, for the optimal horoball packings are summarized in Table \ref{table:data_two}. 
The symmetry group $\Gamma_{\overline{X}_5}$ carries the density from the 
fundamental domain to the entire tiling.

\end{proof}
\begin{corollary}
The optimal horoball packing density for the Coxeter simplex tilings 
$\Gamma \in \{\overline{R}_5, \widehat{AU}_5 \}$ are also 
$\delta_{opt}(\Gamma) = 0.59421\dots$
\end{corollary}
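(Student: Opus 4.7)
The plan is to exploit two different structural features of the Coxeter groups involved. For $\overline{R}_5$, a commensurability argument reduces the problem to the already-computed case of $\overline{U}_5$. For $\widehat{AU}_5$, one carries out the 4-step procedure spelled out above, in direct analogy with the proof of the preceding proposition.

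For $\overline{R}_5$: the lattice of commensurable Coxeter groups shown in Figure \ref{fig:lattice_of_subgroups} places $\overline{R}_5$ in the same commensurability class as $\overline{U}_5$ and $\overline{X}_5$. Because of this, the fundamental simplex $\cF_{\overline{R}_5}$ tiles (and is tiled by) copies of the smaller-index fundamental simplices under the lattice inclusions, and the maximal $\Gamma_{\overline{U}_5}$-invariant horoball packing described in the proof of Proposition \ref{proposition:s4} extends naturally to a $\Gamma_{\overline{R}_5}$-invariant packing: the horoballs at ideal vertices assemble consistently across the refinement. The density, being a ratio of volumes preserved by the tiling relation, is unchanged. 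Alternatively, I could mimic directly the doubly asymptotic calculation carried out for $\overline{X}_5$: compute explicit coordinates for the two ideal vertices of $\cF_{\overline{R}_5}$, find the extremal horoball parameters at each, and apply Lemma \ref{lemma:szirmai} to check that the endpoints of the admissible one-parameter family produce the claimed density.

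For $\widehat{AU}_5$: this is the non-arithmetic case and lies outside the commensurability lattice, so commensurability is unavailable and the full 4-step procedure must be run. First, assign coordinates to the simplex vertices from the Coxeter diagram data, and read off the hyperface forms $\Bu_i$. Second, at each of the two asymptotic vertices compute the extremal horoball type by projecting the vertex onto the opposite 4-dimensional facet, exactly as in the proof of Proposition \ref{proposition:s4}. Third, sweep through the one-parameter family of mutually tangent horoball pairs along the common edge between the two cusps; Lemma \ref{lemma:szirmai} then gives the combined horoball volume in the strictly convex form $V(x) = V_0(0)\,e^{-4x} + V_5(x_{\max})\,e^{4x}$, so its maximum over the admissible interval is attained at one of the two endpoints. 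Fourth, evaluate the density at each endpoint and take the larger one.

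The main obstacle is computational rather than conceptual. The volume of $\cF_{\widehat{AU}_5}$ is known only approximately, via Monte Carlo simulation \cite{JKRT}, so the final density ratio must be computed numerically with care to ensure that the stated digits are reliable; moreover, unlike for $\overline{X}_5$, the Coxeter diagram of $\widehat{AU}_5$ has no symmetry exchanging its two ideal vertices, so the two extremal endpoint densities are not a priori equal and must be compared explicitly. Once the extremal horoballs have been located, convexity from Lemma \ref{lemma:szirmai} rules out any interior maximum and the problem collapses to evaluating and comparing two explicit numbers expressed in terms of the numerical simplex volume.
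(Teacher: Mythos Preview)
The paper's own proof is a single line, ``As in the previous theorem,'' meaning: repeat verbatim the two-cusp computation done for $\overline{X}_5$ using the coordinate data of Table~\ref{table:data_two}. Your alternative direct route for $\overline{R}_5$ is therefore exactly the paper's approach; your commensurability shortcut, however, is incomplete. Since $\Gamma_{\overline{R}_5}$ is a subgroup of $\Gamma_{\overline{U}_5}$, the optimal $\overline{U}_5$-packing is indeed $\Gamma_{\overline{R}_5}$-invariant and gives the lower bound $\delta_{opt}(\overline{R}_5)\ge 0.59421\dots$; but a $\Gamma_{\overline{R}_5}$-invariant packing need not be $\Gamma_{\overline{U}_5}$-invariant (the smaller group allows independent horoball types at its two cusps), so commensurability alone does not give the matching upper bound. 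You still have to run Lemma~\ref{lemma:szirmai} over the admissible interval, as in the $\overline{X}_5$ proof.

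Two further points on $\widehat{AU}_5$. First, the Coxeter diagram $[(3^5,4)]$ is a hexagonal cycle with a single $4$-edge, and the two ideal vertices sit symmetrically opposite that edge; the obvious reflection of the diagram exchanges them. Table~\ref{table:data_two} confirms this: the extremal parameters satisfy $s_1=-s_0$ and the two endpoint volume pairs coincide. So your remark that the two endpoint densities are ``not a priori equal'' is mistaken---they are equal by symmetry, just as for $\overline{X}_5$.

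Second, and more seriously: carrying out your procedure for $\widehat{AU}_5$ will \emph{not} yield $0.59421\dots$. The paper's own Table~\ref{table:simplex_list} and the Remark immediately following this Corollary both record $\delta_{opt}(\widehat{AU}_5)=0.50108\dots$; the simplex is non-arithmetic and lies outside the commensurability class that realizes $0.59421\dots$. The Corollary as stated is in error for $\widehat{AU}_5$, and any correct execution of the four-step method will expose the discrepancy rather than prove the claim.
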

\begin{proof}
As in the previous theorem. 

\end{proof}
\begin{remark}
In $\overline{R}_5$ there is a unique extremal 
case, both extremal horoballs are mutually tangent. The simplex $\widehat{AU}_5$ is non-arithmetic and yields a lower optimal packing density of $0.50108\dots$. For all these cases data for the extremal packings are summarized in Table \ref{table:data_two}.
\end{remark}

\begin{landscape}

\begin{table}[h!]
\resizebox{\columnwidth}{!}{%
	\begin{tabular}{|l|l|l|l|l|}
		 \hline
		 \multicolumn{4}{|c|}{{\bf Coxeter Simplex Tilings} }\\
		 \hline
		 & \multicolumn{3}{|c|}{{\bf Doubly Asymptotic} } \\
		 \hline
		 Witt Symb.: & $\overline{X}_5$ &  $\overline{R}_5$ &  $\widehat{AU}_5$ \\
		 \hline
		 \multicolumn{4}{|c|}{{\bf Vertices of Simplex} }\\
		 \hline
		 $A_0$ & $(1,0,0,0,0,1)^*$ & $(1,0,0,0,0,1)^*$ & $(1,0,0,0,0,1)^*$ \\
		 $A_1$ & $(1,0,0,0,0,0)$ & $(1,0,0,0,0,0)$ & $(1,0,0,0,0,-1)^*$ \\
		 $A_2$ & $(1,\frac{1}{2},0,0,0,0)$ & $(1,\frac{\sqrt{2}}{2},0,0,0,0)$ & $(1,0,0,0,\frac{6 \sqrt{2+\sqrt{2}}}{13},\frac{5}{13})$ \\
		 $A_3$ & $(1,\frac{1}{2},\frac{\sqrt{3}}{6},0,0,0)$ & $(1,\frac{\sqrt{2}}{2},\frac{\sqrt{6}}{6},0,0,0)$ &$(1,0,0,\frac{1}{8} \sqrt{2+\sqrt{2}} \sqrt{3},\frac{3 \sqrt{2+\sqrt{2}}}{8},\frac{1}{2})$  \\
		 $A_4$ & $(1,\frac{1}{2},\frac{\sqrt{3}}{6},\frac{\sqrt{6}}{6},0,0)$ & $(1,\frac{\sqrt{2}}{2},\frac{\sqrt{6}}{6},\frac{\sqrt{3}}{6},0,0)$ & $(1,0,\frac{2}{35} \sqrt{12+6 \sqrt{2}},\frac{4}{35} \sqrt{2+\sqrt{2}} \sqrt{3},\frac{12 \sqrt{2+\sqrt{2}}}{35},\frac{19}{35})$ \\
		 $A_5$ & $(1,\frac{1}{2},\frac{\sqrt{3}}{6},\frac{\sqrt{6}}{6},\frac{1}{\sqrt{2}},0)*$ & $(1,\frac{\sqrt{2}}{2},\frac{\sqrt{6}}{6},\frac{\sqrt{3}}{6},\frac{1}{2},0)^*$ & $(1,\frac{3}{22} \sqrt{2} \sqrt{2+\sqrt{2}},\frac{1}{22} \sqrt{6} \sqrt{2+\sqrt{2}},\frac{1}{11} \sqrt{2+\sqrt{2}} \sqrt{3},\frac{3 \sqrt{2+\sqrt{2}}}{11},\frac{7}{11})$ \\
		 \hline
		 \multicolumn{4}{|c|}{{\bf The form $\mbox{\boldmath$u$}_i$ of sides opposite $A_i$ }}\\
		\hline
		 $\Bu_0$ & $(0,0,0,0,0,\frac{1}{24})^T$ & $(0,0,0,0,0,1)^T$ & $(1,-\frac{3 \sqrt{2}}{2 \sqrt{2+\sqrt{2}}},-\frac{\sqrt{2} \sqrt{3}}{2 \sqrt{2+\sqrt{2}}},-\frac{\sqrt{3}}{\sqrt{2+\sqrt{2}}},-\frac{3}{\sqrt{2+\sqrt{2}}},1)^T$ \\
		 $\Bu_1$ & $(-\frac{1}{24},\frac{1}{12},0,0,0,\frac{1}{24})^T$ & $(1,-\sqrt{2},0,0,0,-1)^T$ & $(1,0,0,0,-\frac{4}{3 \sqrt{2+\sqrt{2}}},-1)^T$  \\
		 $\Bu_2$ & $(0,\frac{1}{12},-\frac{1}{4 \sqrt{3}},0,0,0)^T$ & $(0,-\frac{1}{\sqrt{3}},1,0,0,0)^T$ &  $(0,0,0,-1,\frac{1}{\sqrt{3}},0)^T$ \\
		 $\Bu_3$ & $(0,0,-\frac{1}{4 \sqrt{3}},\frac{1}{4 \sqrt{6}},0,0)^T$ & $(0,0,-\frac{1}{\sqrt{2}},1,0,0)^T$ & $(0,0,-\sqrt{2},1,0,0)^T$ \\
		 $\Bu_4$ & $(0,0,0,\frac{1}{4 \sqrt{6}},-\frac{1}{12 \sqrt{2}},0)^T$ & $(0,0,0,-\sqrt{3},1,0)^T$ & $(0,-\frac{1}{\sqrt{3}},1,0,0,0)^T$  \\
		 $\Bu_5$ & $(0,0,0,0,-\frac{1}{12 \sqrt{2}},0)^T$ & $(0,0,0,0,1,0)^T$ & $(0,1,0,0,0,0)^T$  \\
		 \hline
		 \multicolumn{4}{|c|}{{\bf Maximal horoball-type parameter $s_i$ for horoball $\mathcal{B}_i$ at $A_i$ }}\\
		\hline
		 $\max s_0 \implies s_5 $ & $s_0 = 0 \implies s_5=3/5 $ & $ s_0 = 0 \implies s_5 = 3/5 $ & $s_0=\frac{1}{161} \left(73-36 \sqrt{2}\right) \implies s_1 = s_0$ \\
		 $\max s_5 \implies s_0 $ & $s_5 = 1/3 \implies s_0 = 1/3 $ & $s_5 = 3/5\implies s_0 $ & $s_1=-\frac{1}{161} \left(73-36 \sqrt{2}\right)\implies s_0=s_1$ \\
		 \hline
		 \multicolumn{4}{|c|}{ {\bf Volumes of optimal horoball pieces $V_i = vol(\mathcal{B}_i \cap \mathcal{F}_{\Gamma})$}}\\
		\hline
		 $V_{\max \mathcal{B}_0} \implies V_{\mathcal{B}_5}$ & $ 0.00043\dots \implies 0.00010\dots$ & $0.00043\dots \implies 0.00065\dots$ & $0.00368\dots \implies 0.00010\dots$ \\
		 $V_{s_0} \impliedby V_{\max s_5}$ & $0.00010\dots \impliedby 0.00043\dots$ & $0.00043\dots,0.00065\dots$ & $0.00010\dots \impliedby 0.00368\dots$ \\
		\hline
		 \multicolumn{4}{|c|}{ {\bf Dentities of horoball pieces $\delta_i = vol(\mathcal{B}_i \cap \mathcal{F}_{\Gamma})$}}\\
		\hline
		 $(\delta_{\max s_0},\delta_{s_5})$ & $(0.47537\dots=\frac{8}{10}\delta_{opt}, 0.11884\dots=\frac{2}{10}\delta_{opt})$ & $(0.23768\dots, 0.35653\dots)$ & $(0.48675\dots, 0.01432\dots)$ \\
		 $(\delta_{s_0},\delta_{\max s_5})$ & $(0.11884\dots=\frac{2}{10}\delta_{opt}, 0.47537\dots=\frac{8}{10}\delta_{opt})$ & $(0.23768\dots=\frac{4}{10}\delta_{opt}, 0.35653\dots=\frac{6}{10}\delta_{opt})$ & $(0.01432\dots, 0.48675\dots)$ \\
		\hline
		\multicolumn{4}{|c|}{ {\bf Optimal Horoball Packing Density} }\\
		\hline
		$\delta_{opt} $ & 0.59421\dots & 0.59421\dots & 0.50108\dots \\
		\hline
	\end{tabular}%
}
\caption{Data for doubly asymptotic Coxeter simplex tilings in the Cayley-Klein ball model of radius $1$ centered at $(1,0,0,0,0,0)$. Vertices marked with $^*$ are ideal.}
	\label{table:data_two}
\end{table}

\end{landscape}
%
%%%%%%%%%%%%%%%%%%%%%%%%%%%%%%%%%%%%%%%%%%%
\subsubsection{Triply Asymptotic cases} 
%%%%%%%%%%%%%%%%%%%%%%%%%%%%%%%%%%%%%%%%%%%

The fundamental simplices (Coxeter simplices) of the tilings 
$\overline{N}_5$ and $\overline{O}_5$
both have three asymptotic vertices. 

\begin{proposition}
The optimal horoball packing density for Coxeter simplex tiling $\cT_{\overline{N}_5}$ 
is $\delta_{opt}(\overline{N}_5) =  0.59421 \dots$
\end{proposition}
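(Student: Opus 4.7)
The plan is to adapt the method used for the doubly asymptotic case $\overline{X}_5$ to the triply asymptotic simplex $\overline{N}_5$, whose Coxeter diagram $[4,3,{}_3^{3,4}]$ has three empty nodes corresponding to three ideal vertices. First I would assign projective coordinates to the vertices $A_0,\dots,A_5$ of $\cF_{\overline{N}_5}$ consistent with the prescribed dihedral angles, and compute the outward normal forms $\Bu_i$ to the hyperfaces opposite each $A_i$ via the Gram matrix. Three of these vertices, say $A_0, A_4, A_5$, lie on the absolute quadric and will carry horoballs $\mathcal{B}_0(s_0), \mathcal{B}_4(s_4), \mathcal{B}_5(s_5)$ with independent type parameters.

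Next, at each ideal vertex $A_i$ I would compute the maximal admissible horoball type, i.e.\ the parameter $s_i^{\max}$ for which $\mathcal{B}_i$ is tangent to the opposite facet $[\Bu_i]$; this is obtained by solving \eqref{eqn:horosphere} at the perpendicular foot of $A_i$ onto $[\Bu_i]$ given by (4). For each such extremal choice of one horoball, I would then inflate the remaining two horoballs until they hit mutual tangency or tangency with the first, producing at most three extremal configurations. For each configuration, the volume of the horoball piece $\mathcal{B}_i \cap \cF_{\overline{N}_5}$ is computed by parameterizing the edges out of $A_i$ to find the intersections with $\partial \mathcal{B}_i$, evaluating the horospheric distances $L_{ij}=2\sinh(l_{ij}/2)$ via \eqref{eq:horo_dist}, applying the Cayley--Menger determinant to obtain the Euclidean $4$-volume $\mathcal{A}$ of the horospheric $4$-simplex, and using Bolyai's formula \eqref{eq:bolyai} to get $vol(\mathcal{B}_i \cap \cF_{\overline{N}_5}) = \mathcal{A}/4$.

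To rule out an intermediate configuration giving higher density, I would continuously deform between pairs of extremal configurations, preserving tangency of whichever pair of horoballs is adjacent along the path, and apply Lemma \ref{lemma:szirmai} to the relevant pair. The lemma gives that the sum of the two tangent horoball volumes equals $\tfrac{V(0)}{2}\bigl(e^{4x}+e^{-4x}\bigr)$ in signed displacement $x$, which is strictly convex and so attains its maximum on any compact subinterval at an endpoint. Since $\overline{N}_5$ lies in the commensurability class of $\overline{U}_5$ (see Figure \ref{fig:lattice_of_subgroups}), I expect each of the extremal configurations to yield the common density $\delta_{opt}=0.59421\dots$ already computed for $\overline{U}_5$; the convexity argument then shows every intermediate density is strictly smaller than the extremal one.

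The main obstacle is the combinatorial bookkeeping of tangency: the admissible set in the two-dimensional parameter space $(s_4,s_5)$ after fixing $s_0$, say, is bounded by a piecewise-smooth curve whose pieces correspond to different pairs becoming tangent or to a horoball touching its opposite facet. These adjacency conditions can change along a deformation, so the admissible region must be partitioned into finitely many pieces on each of which a single pair of constraints is active, and Lemma \ref{lemma:szirmai} must be applied piece by piece, with the boundary values of successive pieces matched to extend the comparison across the whole region. Once this is carried out and the finitely many extremal densities are tabulated (analogous to Table \ref{table:data_two}), they are all equal to $\delta_{opt}=0.59421\dots$, and the Coxeter group $\Gamma_{\overline{N}_5}$ propagates this density from $\cF_{\overline{N}_5}$ to the whole tiling of $\mathbb{H}^5$.
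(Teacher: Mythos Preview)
Your proposal is correct and follows essentially the same method as the paper. The only notable difference is that the paper exploits the symmetry of the Coxeter diagram of $\overline{N}_5$ to identify two of the three extremal cases (the $\mathcal{B}_0$-maximal and $\mathcal{B}_2$-maximal configurations are equivalent), and then connects the two genuinely distinct extremal configurations by a single one-parameter family along which all three horoballs remain mutually tangent; a three-term variant of Lemma~\ref{lemma:szirmai} is applied directly to this path, avoiding the full two-dimensional bookkeeping you outline. Your more cautious treatment of the admissible region in $(s_4,s_5)$ would of course also work, but the symmetry reduction makes it unnecessary here. (A trivial point: in the paper's coordinates the ideal vertices are $A_0,A_2,A_5$ rather than $A_0,A_4,A_5$.)
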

\begin{proof}
Assign coordinates to the fundamental domain $\cF_{\overline{N}_5}$ as in 
Table 4. The three asymptotic vertices are 
$A_0=(1,0,0,0,0,1)$, $A_2=(1,0,0,0,1,0)$, and 
$A_5=(1,\frac{\sqrt{2}}{2},0,-\frac{1}{2},\frac{1}{2},0)$.

Let $\cF_{\overline{N}_5}$ denote the fundamental domain (Coxeter simplex) of
Coxeter group $\overline{N}_5$. 
Place three horoballs $\mathcal{B}_0(\arctanh s_0)$, $\mathcal{B}_2(\arctanh s_2 )$, 
and $\mathcal{B}_5(\arctanh s_5 )$ 
with parameters
$s_0$, $s_2$, and $s_5$ at $A_0$, $A_2$, and $A_5$ respectively.
Let $x_i = \arctanh s_i $ denote the hyperbolic distance of the center of the model 
$(1,0,0,0,0,0)$ to point $S_i=(1,0,0,0,0,s_i)$ where $i\in\{0,2,5\}$. 
If the horoball $\mathcal{B}_0$ is of maximal type then $s_0 = 0$, and the tangent horoballs $\mathcal{B}_2$ and $\mathcal{B}_5$ have 
$s_2 = \frac{3}{5}$ and $s_5 = \frac{3}{5}$. 

If horoball $\mathcal{B}_2$ is of maximal type then we have the same case up the appropriate symmetry of the Coxeter simplex,
so it suffices to check densities up to the midpoint of the allowed horoball parameter range. 

If horoball $\mathcal{B}_5$ is of maximal 
type then its parameter $s_5=\frac{1}{3}$ and the corresponding tangent 
horoballs $\mathcal{B}_0$ and $\mathcal{B}_2$ have 
parameters $s_0=\frac{1}{3}$ and $s_2=\frac{7}{9}$. 
Horoballs $\mathcal{B}_0(\arctanh0)$ and 
$\mathcal{B}_5(\arctanh \frac{1}{3})$ are tangent to hyperfaces $\Bu_0$ and 
$\Bu_5$ respectively. The densities of the above two extremal horoball arrangements 
are equal to 
$\delta_{opt}$ (see Theorem 2):
\begin{align*}
\delta_{opt} &= \delta_{s_0=0,s_2=\frac{3}{5},s_5=\frac{3}{5}}(\overline{N}_5) = \\
&= \frac{vol(\mathcal{B}_0(\arctanh0) \cap \cF_{\overline{X}_5}) 
+ \sum_{i \in \{2,5\}} vol(\mathcal{B}_i(\arctanh\frac{3}{5})) \cap \cF_{\overline{X}_5})
}{vol(\cF_{\overline{N}_5})}\\
& = \delta_{s_0=\frac{1}{3},s_2=\frac{7}{9},s_5=\frac{1}{3}}(\overline{N}_5) = \\
& = \frac{ 
vol(\mathcal{B}_2(\arctanh\frac{7}{9}) \cap \cF_{\overline{N}_5})
+ \sum_{i \in \{0,5\}}
vol(\mathcal{B}_i(\arctanh\frac{1}{3}) \cap \cF_{\overline{N}_5}) 
}{vol(\cF_{\overline{N}_5})}\\
&= 0.59421\dots 
\end{align*}
We examine the horoball arrangements that transition between the above mentioned cases. Starting with the 
horoball arrangement with parameters $s_0=0$ and $s_5=\frac{3}{5}$, the horoballs 
$\mathcal{B}_5(\arctanh s_i )$ where $i\in\{0,5\}$ are tangent. Define volumes 
$V_i(x) = vol(\mathcal{B}_i(\arctanh s_i - x) \cap \cF_{\overline{N}_5})$ for $i \in \{0,2,5\}$
  with $x \in [0,\arctanh\frac{1}{3}]$ where
$\arctanh \frac{1}{3}$ is the hyperbolic distance of 
$(1,0,0,0,0,0)$ and $(1,0,0,0,0,\frac{1}{3})$.

With the formulas (2), (5), (6), and (7) compute that 
$V_0(\arctanh 0 ) = 0.23768\dots$, $V_2(\arctanh\frac{3}{5}) = 0.23768\dots$  and $V_5(\arctanh\frac{3}{5}) = 0.11884\dots$. 
 
By the simple modification of Lemma \ref{lemma:szirmai}, 
\begin{equation}
V(x) = V_0(0) e^{-4x} + V_2\left(\arctanh\frac{3}{5}\right) e^{4x} + V_5\left(\arctanh\frac{3}{5}\right) e^{4x}
\end{equation}
and the densities of the intermediate cases between of the above two 
extremal arrangements are given by the density function
\begin{equation}
\begin{split}
\delta_{x}(\overline{N}_5) &=
 \frac{vol(\mathcal{B}_0(x) \cap \cF_{\overline{N}_5}) 
+ \sum_{ i \in \{2,5\} } 
vol(\mathcal{B}_i(\arctanh \frac{3}{5}-x ) \cap \cF_{\overline{N}_5}) 
}{vol(\cF_{\overline{X}_5})}\\
&= \left( \frac{2}{5} e^{-4x}+\frac{2}{5} e^{-4x}+\frac{1}{5} e^{4x} \right) \delta_{opt}.
\end{split}
\end{equation}
where $x\in [0,\arctanh\frac{1}{3}]$.
Analysis of $\delta_{x}(\overline{N}_5)$ shows that 
 its maxima are attained at the endpoints of the interval domain 
$[0,\arctanh \frac{1}{3}]$, in particular
\begin{equation}
\begin{split}
\delta_{\arctanh\frac{1}{3}}(\overline{N}_5) 	&= \left( \frac{2}{5} e^{-4 \arctanh\frac{1}{3}} + \frac{2}{5} e^{-4 \arctanh \frac{1}{3} }+\frac{1}{5} e^{4 \arctanh\frac{1}{3}} \right) 
\delta_{opt} 
\\
& = \left( \frac{2}{5}\left(\frac{1-1/3}{1+1/3}\right)^2 + \frac{2}{5} \left(\frac{1-1/3}{1+1/3}\right)^2+\frac{1}{5} \left(\frac{1+1/3}{1-1/3}\right)^2  \right) \delta_{opt}
\\
& = \left( \left(\frac{1}{2}\right)^2\frac{2}{5}+\left(\frac{1}{2}\right)^2\frac{2}{5}+2^2\frac{1}{5} \right)\delta_{opt}
\\
& = \left( \frac{1}{10}+\frac{1}{10}+\frac{4}{5} \right)\delta_{opt}.\\
\end{split}
\end{equation}

The metric data of the optimal horoball packings are summarized in Table 4. 
The symmetry group $\Gamma_{\overline{N}_5}$ carries the density from the 
fundamental domain to the entire tiling.

\end{proof}
As the above proof we again obtain
\begin{proposition}
The optimal horoball packing density for Coxeter simplex tilings 
$\overline{O}_5$ is 
$\delta_{opt}(\overline{O}_5) = 0.59421\dots$
\end{proposition}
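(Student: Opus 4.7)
The plan is to mirror the strategy used for $\overline{N}_5$, reusing the same three-horoball variational argument. First I would fix a projective coordinate system as in Table~\ref{table:data_two}, with the Cayley--Klein ball centered at $(1,0,0,0,0,0)$, and read off the vertices of the Coxeter simplex $\cF_{\overline{O}_5}$ from the diagram $[3,4,3,3^{1,1}]$, together with the normal forms $\Bu_i$ of the five bounding hyperfaces. Using the angle data from the Coxeter scheme, I would locate the three asymptotic vertices $A_{i_1}, A_{i_2}, A_{i_3}$ on the absolute quadric and identify any simplex symmetry that permutes them (the two ``branching'' ideal vertices attached to the $3^{1,1}$ fork should be interchangeable, reducing the number of distinct extremal cases).

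Next, at each ideal vertex $A_{i_k}$ I would compute the maximal admissible horoball type $\mathcal{B}_{i_k}(\arctanh s_{i_k}^{\max})$, i.e.\ the horoball tangent to the opposite hyperface $\Bu_{i_k}$, by using equation~\eqref{eqn:horosphere} and the perpendicular-foot formula. Having obtained $s_{i_k}^{\max}$, I would ``blow up'' the horoballs centered at the remaining two ideal vertices until they touch the extremal horoball, recording the resulting type parameters. Because of the symmetry noted above, only two essentially different extremal configurations should need to be analyzed, and in each case the horoball piece volumes will be computed via \eqref{eq:bolyai} and a Cayley--Menger determinant on the horospheric $4$-simplex.

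I then interpolate between the two extremal arrangements by a single hyperbolic parameter $x$ measuring how far the point of mutual tangency slides along the edge between one ``maximal'' vertex and each of its asymptotic neighbors. Invoking Lemma~\ref{lemma:szirmai} applied to each of the two adjacent pairs of cones gives the density function in the form
\begin{equation}
\delta_x(\overline{O}_5) \;=\; \bigl(\alpha_1 e^{-4x} + \alpha_2 e^{-4x} + \alpha_3 e^{4x}\bigr)\,\delta_{opt},
\notag
\end{equation}
with positive coefficients $\alpha_j$ summing to $1$ that are determined by the three horoball piece volumes at the first extremal configuration. Since this is a sum of two decaying and one growing exponential, its maxima on the compact parameter interval are attained only at the endpoints; I would then check that evaluating at either endpoint recovers exactly $\delta_{opt}=0.59421\dots$, as in the explicit identity of the $\overline{N}_5$ case.

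The main obstacle I expect is bookkeeping rather than theory: correctly choosing the coordinates so that the three ideal vertices are placed symmetrically enough to make the two extremal configurations transparent, and verifying that during the interpolation no horoball accidentally becomes inadmissible (for instance by crossing a non-adjacent facet) before the endpoint is reached. Once the contact structure is shown to remain valid throughout $x\in[0,\arctanh\tfrac{1}{3}]$, the exponential form above forces the equality $\delta_{opt}(\overline{O}_5)=\delta_{opt}$, and the Coxeter group $\Gamma_{\overline{O}_5}$ extends the optimal density from $\cF_{\overline{O}_5}$ to the whole tiling of $\mathbb{H}^5$.
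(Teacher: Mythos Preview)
Your proposal is correct and follows essentially the same route as the paper, which simply states ``As the above proof we again obtain'' and then records in a remark that the symmetry of the $3^{1,1}$ fork reduces the extremal cases to two, with relative densities $\left(\tfrac{3}{5},\tfrac{1}{5},\tfrac{1}{5}\right)$ and $\left(\tfrac{4}{5},\tfrac{3}{20},\tfrac{1}{20}\right)$. Your identification of the interchangeable fork vertices, the two extremal configurations, and the exponential interpolation via Lemma~\ref{lemma:szirmai} all match the paper's treatment; the only adjustment is that in the paper's coordinates the symmetric ideal pair is $A_0,A_1$ (not the pair opposite the fork) and the third ideal vertex is $A_5$, so your coefficients become $\alpha_1=\tfrac{3}{5},\ \alpha_2=\tfrac{1}{5},\ \alpha_3=\tfrac{1}{5}$ on the interval $[0,\arctanh\tfrac{1}{3}]$.
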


\begin{remark}
In the case of $\overline{O}_5$ the symmetries of the Coxeter diagram imply two classes of extremal horoball packings. 
In our notation, the horoballs at either $A_0$, $A_1$, or $A_5$ are maximized. Notice the $\mathcal{B}_0$ and $\mathcal{B}_1$ maximal cases are equivalent up to symmetry. 
As before, find horoball parameters $\max s_0 = \frac{1}{5}$, $\max s_1 = \frac{5}{7}$ and $\max s_5 = \frac{13}{19}$. By maximizing the remaining two horoballs 
subject to the constraint given the extremal horoball we find the extremal horoballs at the remaining vertices. Results are summarized in Table \ref{table:data_our}. 
In all extremal cases the packing density is $0.59421\dots$, and the relative packing densities of the three horoballs in the fundamental domain $\mathcal{F}_{\overline{O}_5}$ are given by 
$\left(\frac{3}{5},\frac{1}{5},\frac{1}{5}\right)$ and
$\left(\frac{4}{5},\frac{3}{20},\frac{1}{20}\right)$.
\end{remark}
%
%%%%%%%%%%%%%%%%%%%%%%%%%%%%%%%%%%%%%%%%%%%
\subsubsection{Doubling sequence of fundamental domains} 
%%%%%%%%%%%%%%%%%%%%%%%%%%%%%%%%%%%%%%%%%%% cases and the domain doubling sequence
%
As in the 4-dimensional case \cite{KSz14} here we have a {\it doubling sequence of fundamental domains}, meaning one simplex cell is obtained from anoother by a reflection across a given facet. 
It the $5$-dimensional case the tilings $\overline{N}_5$, $\overline{M}_5$ and $\overline{L}_5$ form a similar sequence (see Fig.~1). 
By symmetry considerations it suffices to consider the extremal horoball at the new vertex, as the other extremal cases were considered in a previous case, or are equivalent by symmetry. 

We generalize the above results to the two triply asymptotic tilings using the subgroup relations of the multiply 
asymptotic tilings given in Figure \ref{fig:lattice_of_subgroups}. The indices of the subgroups are 
\begin{equation}
|\Gamma_{\overline{N}_5}:\Gamma_{\overline{M}_5}|=
|\Gamma_{\overline{M}_5}:\Gamma_{\overline{L}_5}|=2,
\end{equation}
the fundamental domains are related by domain doubling, hence the optimal packing density is at least $\delta = 0.59421\dots$ for all multiply asymptotic cases. 
By repeated use of Lemma \ref{lemma:szirmai}, we can show that this value is indeed the optimal packing density for all multiply asymptotic 
cases. We omit the technical details of the proof. 
\begin{proposition}
The optimal horoball packing density for Coxeter simplex tilings $\cT_\Gamma$, $\Gamma \in \{ \overline{M}_5, 
\overline{L}_5 \}$ is $\delta_{opt}(\Gamma) = 0.59421\dots$.
\end{proposition}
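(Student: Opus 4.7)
The plan is to exploit the commensurability structure shown in Figure \ref{fig:lattice_of_subgroups}: $\overline{N}_5 \supset \overline{M}_5 \supset \overline{L}_5$ is a chain of index-$2$ subgroup inclusions, so the fundamental simplex $\cF_{\overline{M}_5}$ is obtained by reflecting $\cF_{\overline{N}_5}$ across a bounding hyperface, and similarly $\cF_{\overline{L}_5}$ from $\cF_{\overline{M}_5}$. This doubling perspective will supply both the lower and upper bounds on $\delta_{opt}$.

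For the lower bound, I would first observe that any $\Gamma_{\overline{N}_5}$-invariant horoball packing is automatically $\Gamma_{\overline{M}_5}$-invariant, since $\Gamma_{\overline{M}_5} \le \Gamma_{\overline{N}_5}$. The optimal packing for $\overline{N}_5$ constructed in the preceding proposition therefore realizes a packing of $\overline{M}_5$ with the same density $0.59421\dots$, and by the same inclusion this packing is also $\Gamma_{\overline{L}_5}$-invariant. Hence $\delta_{opt}(\overline{M}_5) \ge 0.59421\dots$ and $\delta_{opt}(\overline{L}_5) \ge 0.59421\dots$.

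For the upper bound, I would proceed as in the doubly- and triply-asymptotic cases. The fundamental domain $\cF_{\overline{M}_5}$ has four ideal vertices and $\cF_{\overline{L}_5}$ has five, so at each vertex I set up the one-parameter family of admissible horoballs $\mathcal{B}_i(s_i)$ as before, determine the maximal type parameters by requiring tangency to the opposite facet, and then enumerate the extremal configurations (those in which some $\mathcal{B}_i$ is already maximal and the remaining horoballs are blown up until tangent to each other or to their opposite facets). By the doubling construction, the newly created asymptotic vertex in each step is the mirror image of an existing asymptotic vertex of the parent tiling, so by symmetry the only genuinely new extremal case to check is the one in which the maximal horoball sits at this new vertex; all other extremal cases are equivalent under a reflection to cases already handled for $\overline{N}_5$ (respectively $\overline{M}_5$).

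To interpolate between these finitely many extremal configurations, I would then apply Lemma \ref{lemma:szirmai} coordinate-wise, parameterizing transitions by the hyperbolic displacement $x$ of a tangency point along an edge. Each transition yields a density of the form $\delta_x = \delta_{opt}\sum_i c_i e^{\pm 4x}$ with $\sum_i c_i = 1$ and $c_i \ge 0$; a convexity argument identical to the $\overline{X}_5$ and $\overline{N}_5$ cases shows that the maximum on the admissible interval is attained at its endpoints, and algebraic identities of the form $(\tfrac12)^2 c + 2^2 c' = c + c'$ (as computed explicitly in the $\overline{X}_5$ and $\overline{N}_5$ proofs) show the endpoint values equal $\delta_{opt}$. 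The main obstacle is bookkeeping: as the number of ideal vertices grows, one must track which pairs of horoballs remain in contact during each transition and verify admissibility throughout, but this is a mechanical rather than conceptual difficulty, which is why the authors omit the technical details. Combining the matching bounds gives $\delta_{opt}(\overline{M}_5) = \delta_{opt}(\overline{L}_5) = 0.59421\dots$.
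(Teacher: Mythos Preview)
Your proposal is correct and follows essentially the same approach as the paper: the lower bound comes from the index-$2$ inclusions $\Gamma_{\overline{L}_5} \le \Gamma_{\overline{M}_5} \le \Gamma_{\overline{N}_5}$ via domain doubling, and the upper bound from analyzing the extremal configurations with repeated use of Lemma~\ref{lemma:szirmai}, where symmetry reduces the work to the single new ideal vertex at each step. The paper likewise states the argument at this level of detail and explicitly omits the bookkeeping, so your outline matches both its strategy and its level of rigor.
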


A summary of the results for multiply asymptotic tilings are given in Table 4. 
\begin{table}[h!]
\resizebox{\columnwidth}{!}{%
	\begin{tabular}{|l|l|l|l|l|}
		 \hline
		 \multicolumn{4}{|c|}{{\bf Coxeter Simplex Tilings} }\\
		 \hline
		 & \multicolumn{3}{|c|}{{\bf Doubling Sequence} } \\
		 \hline
		 Witt Symb. & $\overline{N}_5$ &  $\overline{M}_5$ &  $\overline{L}_5$ \\
		 \hline
		 \multicolumn{4}{|c|}{{\bf Vertices of Simplex} }\\
		 \hline
		 $A_0$ & $(1,0,0,0,0,1)^*$ & $(1,0,0,0,0,1)^*$ & $(1,0,0,0,0,1)^*$ \\
		 $A_1$ & $(1,0,0,0,0,0)$ & $(1,0,0,0,0,0) \rightarrow $ & $(1,0,0,0,0,-1)^*$ \\
		 $A_2$ & $(1, 0, 0, 0, 1, 0)^*$ & $(1, 0, 0, 0, 1, 0)^*$ & $(1,0,0,0,1,0)^*$ \\
		 $A_3$ & $(\left.1,0,0,-\frac{1}{2},\frac{1}{2},0\right)$ & $(1,0,0,-\frac{1}{2},\frac{1}{2},0)$ &$(1,0,0,-\frac{1}{2},\frac{1}{2},0)$  \\
		 $A_4$ & $(1,0,\frac{1}{2 \sqrt{2}},-\frac{1}{4},\frac{3}{4},0) \rightarrow $ & $(1,0,\frac{\sqrt{2}}{2},-\frac{1}{2},\frac{1}{2},0)^*$ & $(1,0,\frac{\sqrt{2}}{2},-\frac{1}{2},\frac{1}{2},0)^*$ \\
		 $A_5$ & $(1,\frac{\sqrt{2}}{2},0,-\frac{1}{2},\frac{1}{2},0)^*$ & $(1,\frac{\sqrt{2}}{2},0,-\frac{1}{2},\frac{1}{2},0)^*$ & $(1,\frac{\sqrt{2}}{2},0,-\frac{1}{2},\frac{1}{2},0)^*$ \\
		 \hline
		 \multicolumn{4}{|c|}{{\bf The form $\mbox{\boldmath$u$}_i$ of sides opposite $A_i$ }}\\
		\hline
		 $\mbox{\boldmath$u$}_0$ & $(0, 0 , 0, 0, 0 , -1 )^T$ & $(0, 0, 0, 0, 0, 1)^T$ & $(1, 0, 0, 1, -1, 1)^T$ \\
		 $\mbox{\boldmath$u$}_1$ & $(1, 0, 0, 1, -1, -1)^T$ & $(1,0,0,0,-\sqrt{2},-1)^T$ & $(1, 0, 0, 1, -1, -1)^T$  \\
		 $\mbox{\boldmath$u$}_2$ & $(0,0,\sqrt{2},-1,-1,0)^T$ & $(0, 1, -1, -1, 1, 0)^T$ &  $(0, 0, 0, 1, 1, 0)^T$ \\
		 $\mbox{\boldmath$u$}_3$ & $(0,-1,-1,-\sqrt{2},0,0)^T$ & $(0, 0, 0, 1, 0, 0)^T$ & $(0,-1,-1,-\sqrt{2},0,0)^T$ \\
		 $\mbox{\boldmath$u$}_4$ & $(0, 0, -1, 0, 0, 0)^T$ & $(0, 0, 1, 0, 0, 0)^T$ & $(0, 0, 1, 0, 0, 0)^T$  \\
		 $\mbox{\boldmath$u$}_5$ & $(0, 1, 0, 0, 0, 0)^T$ & $(0, -1, 0, 0, 0, 0)^T$ & $(0, 1, 0, 0, 0, 0)^T$  \\
		 \hline
		 \multicolumn{4}{|c|}{{\bf Maximal horoball-type parameter $s_i$ for horoball $\mathcal{B}_i$ at $A_i$ }}\\
		\hline
		 $s_0$ & $0 (\delta=\frac{2}{5} \delta_{opt})$ & $0  (\delta=\frac{2}{5} \delta_{opt})$ & $-1/3 (\delta=\frac{4}{5} \delta_{opt})$ \\
		 $s_1$ & $-$ & $-$ & $1/3 (\delta=\frac{4}{5} \delta_{opt})$ \\
		 $s_2$ & $3/5 (\delta=\frac{2}{5} \delta_{opt})$ & $1/3 (\delta=\frac{4}{5} \delta_{opt})$ & $1/3 (\delta=\frac{4}{5} \delta_{opt})$ \\
		 $s_3$ & $-$ & $-$ & $-$ \\ 
		 $s_4$ & $-$ & $1/3 (\delta=\frac{4}{5} \delta_{opt})$ & $1/3 (\delta=\frac{4}{5} \delta_{opt})$ \\ 
		 $s_5$ & $1/3 (\delta=\frac{4}{5} \delta_{opt})$ & $1/3 (\delta=\frac{4}{5} \delta_{opt})$ & $-1/3 (\delta=\frac{4}{5} \delta_{opt})$ \\
		 \hline
		 \multicolumn{4}{|c|}{ Horoball Parameters}\\
		\hline
		 $(\mathcal{B}_0 \leftrightarrow \mathcal{B}_2) \leftrightarrow  \mathcal{B}_5$ & $({\bf s_0=0},{\bf s_2=\frac{3}{5}},s_5=\frac{3}{5})$ & - & - \\
		 $\mathcal{B}_5 \rightarrow (\mathcal{B}_0, \mathcal{B}_2)$ & $(s_0=\frac{1}{3},s_2=\frac{7}{9},{\bf s_5=\frac{1}{3}})$ & - & - \\
		 $\mathcal{B}_2 \rightarrow (\mathcal{B}_0, \mathcal{B}_4, \mathcal{B}_5)$ & - &  $(s_0=\frac{1}{3},{\bf s_2=\frac{7}{9}}, s_4=\frac{1}{3}, s_5=\frac{7}{9})$ & - \\
		 $\mathcal{B}_1 \rightarrow (\mathcal{B}_0, \mathcal{B}_2, \mathcal{B}_4, \mathcal{B}_5)$ & - &  - & $(s_0=\frac{1}{3}, {\bf s_1=\frac{-1}{3}},s_2=\frac{7}{9}, s_4=\frac{1}{3}, s_5=\frac{7}{9})$ \\

		\hline
		\multicolumn{4}{|c|}{Packing Ratios w.r.t. $\delta_{opt}$}\\
		\hline
		 $(\mathcal{B}_0 \leftrightarrow \mathcal{B}_2) \leftrightarrow  \mathcal{B}_5$ & $(\frac{2}{5},\frac{2}{5},\frac{1}{5})$ & - & - \\
		 $\mathcal{B}_5 \rightarrow (\mathcal{B}_0, \mathcal{B}_2)$ & $(\frac{4}{5},\frac{1}{10},\frac{1}{10})$ & - & - \\
		 $\mathcal{B}_2 \rightarrow(\mathcal{B}_0, \mathcal{B}_4, \mathcal{B}_5)$ & - &  $(\frac{4}{5}, \frac{1}{10},\frac{1}{20},\frac{1}{20})$ & - \\
		 $\mathcal{B}_1 \rightarrow(\mathcal{B}_0, \mathcal{B}_2, \mathcal{B}_4, \mathcal{B}_5)$ & - &  - & $(\frac{4}{5}, \frac{1}{20},\frac{1}{20},\frac{1}{20},\frac{1}{20})$ \\
		\hline
		 \multicolumn{4}{|c|}{ {\bf Optimal Horoball Packing Density} }\\
		\hline
		$\delta_{opt} $ & 0.59421\dots & $0.59421\dots$ & $0.59421\dots$\\
		\hline
	\end{tabular}%
}
	\caption{Data for multiply asymptotic Coxeter simplex tilings in the Cayley-Klein ball model of radius $1$ centered at $(1,0,0,0,0,0)$. Vertices marked with $^*$ are ideal.}
	\label{table:data_nml}
\end{table}

%%%%%%%%%%%%%%%%%%%%%%%%%%%%%%%%%%%%%%%%%%%
\subsubsection{The totally asymptotic case, $\widehat{UR}_5$}
%%%%%%%%%%%%%%%%%%%%%%%%%%%%%%%%%%%%%%%%%%%

In this case all vertices are ideal. From the symmetries of the Coxeter diagram, there are 
two classes of extremal horoball packings. In our coordinates, the horoballs at either $A_0$ or $A_2$ are to be maximized, 
the remaining four maximal cases follow from symmetry. As before, find horoball parameters $\max s_0 = \frac{1}{17}$ and $\max s_2 = \frac{133}{205}$. 
By maximizing the other five horoballs to be tangent subject to the constraints we find the largest possible 
admissible horoballs at the remaining vertices. Results are summarized in Table \ref{table:data_our}. 
The adjacency graphs of horoballs $\mathcal{B}_i$ at ideal vertices $A_i$ of the optimal cases are in Figure \ref{fig:ur5_adj}, 
observe when $\mathcal{B}_0$ is extremal $\mathcal{B}_2$ decouples from $\mathcal{B}_1$, $\mathcal{B}_3$ and $\mathcal{B}_5$. In both cases the packing density is $\delta_{opt} = 0.59421\dots$, and 
the relative packing densities of the six horoballs in the fundamental domain $\mathcal{F}_{\widehat{UR}_5}$ are given by 
$\left(\frac{3}{5},\frac{3}{20},\frac{1}{10},\frac{1}{10},\frac{1}{40},\frac{1}{40}\right)$ and
$\left(\frac{2}{5},\frac{2}{5},\frac{3}{20},\frac{3}{80},\frac{1}{160},\frac{1}{160}\right)$.
Finally we obtain the following
\begin{proposition}
The optimal horoball packing density for totally asymptotic Coxeter simplex tiling  $\cT_{\widehat{UR}_5}$ is  $\delta_{opt}(\widehat{UR}_5) \approx 0.59421$.
\end{proposition}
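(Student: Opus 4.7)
The plan is to mirror the argument used for the multiply asymptotic cases, but now with all six vertices ideal. First I set up coordinates for $\cF_{\widehat{UR}_5}$ with all vertices $A_0,\ldots,A_5$ on the absolute quadric, and compute the outward normal forms $\Bu_i$ of the opposing facets from the Gram matrix encoded in the Coxeter diagram $[(3^2,4)^{[2]}]$. The symmetries of this diagram (a square prism-like graph with the two $4$-labelled edges) partition the vertices into two orbits: one orbit containing $A_0$ (with $\max s_0 = \tfrac{1}{17}$) and one orbit containing $A_2$ (with $\max s_2 = \tfrac{133}{205}$). It therefore suffices to analyze just these two extremal-type configurations, since the other four maximal cases are obtained by applying symmetries of the fundamental simplex.

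Next I place one horoball of extremal type at, say, $A_0$ (so $\mathcal{B}_0$ is tangent to the facet $\Bu_0$) and then successively blow up the remaining five horoballs $\mathcal{B}_1,\ldots,\mathcal{B}_5$ until each is either tangent to a neighbor already present in the packing or tangent to its own opposite facet. Using the horosphere equation \eqref{eqn:horosphere} (after applying the appropriate element of $SO(1,5)$ to recenter at each ideal vertex), together with the formulas \eqref{eq:horo_dist} and \eqref{eq:bolyai} and a Cayley--Menger determinant for the four-dimensional horospherical simplex on each facet, I compute $vol(\mathcal{B}_i \cap \cF_{\widehat{UR}_5})$ for every $i$. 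Summing these six volumes and dividing by $vol(\cF_{\widehat{UR}_5}) = 7\zeta(3)/288$ gives one of the claimed relative density distributions, either
\[
\left(\tfrac{3}{5},\tfrac{3}{20},\tfrac{1}{10},\tfrac{1}{10},\tfrac{1}{40},\tfrac{1}{40}\right)\delta_{opt}
\quad\text{or}\quad
\left(\tfrac{2}{5},\tfrac{2}{5},\tfrac{3}{20},\tfrac{3}{80},\tfrac{1}{160},\tfrac{1}{160}\right)\delta_{opt}.
\]

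To rule out any intermediate configuration exceeding $\delta_{opt}$, I apply Lemma \ref{lemma:szirmai} repeatedly along each pair of adjacent tangent horoballs. As in the doubly and triply asymptotic proofs, shrinking one horoball by a hyperbolic distance $x$ along a common edge while growing its neighbor produces a combined piece-volume of the form $\tfrac{V(0)}{2}(e^{4x}+e^{-4x})$ (with $n=5$), so the density function in any one deformation parameter is a weighted sum of the form $\sum_i c_i e^{\pm 4x}$ with $\sum c_i = 1$ in appropriate units of $\delta_{opt}$. A direct check shows that on each admissible parameter interval this is convex and attains its maximum at the endpoints — precisely the extremal configurations identified above. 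The adjacency graphs shown in Figure \ref{fig:ur5_adj} tell us which horoball pairs must be tracked, and in the $\mathcal{B}_0$-extremal case $\mathcal{B}_2$ decouples from $\mathcal{B}_1,\mathcal{B}_3,\mathcal{B}_5$, reducing the analysis to independent one-parameter families.

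The main obstacle is bookkeeping: with six ideal vertices and the richer adjacency structure, one must carefully enumerate the finitely many ``contact regimes'' that arise as the type parameters $(s_0,\ldots,s_5)$ vary in the admissible polytope, verifying within each regime which pairs of horoballs remain tangent and which pair decouples. Once this combinatorial classification is in hand, each regime reduces to a one- or two-parameter convex optimization handled by Lemma \ref{lemma:szirmai}. The subgroup relation $\widehat{UR}_5 < \overline{O}_5 < \overline{R}_5 < \overline{U}_5$ from Figure \ref{fig:lattice_of_subgroups} provides the lower bound $\delta_{opt}(\widehat{UR}_5) \geq 0.59421\dots$ by commensurability, so the computations above only need to confirm that equality holds and that no configuration exceeds this value.
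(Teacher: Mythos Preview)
Your proposal is correct and follows essentially the same approach as the paper, which in fact states only ``The proof is as in the previous cases, we omit the details'' and refers the reader to the surrounding discussion, Table~\ref{table:data_our}, and Figure~\ref{fig:ur5_adj}. Your use of the subgroup chain $\widehat{UR}_5 < \overline{O}_5 < \overline{R}_5 < \overline{U}_5$ to secure the lower bound $\delta_{opt}(\widehat{UR}_5)\ge 0.59421\dots$ by commensurability is a clean addition that the paper employs explicitly only for the doubling sequence $\overline{N}_5,\overline{M}_5,\overline{L}_5$, though the same idea is implicit here via Figure~\ref{fig:lattice_of_subgroups}.
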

The proof is as in the previous cases, we omit the details. The contact structure of the optimal cases are given in Figure \ref{fig:ur5_adj}, while 
the metric data of the optimal cases are summarized in Table \ref{table:data_our}.

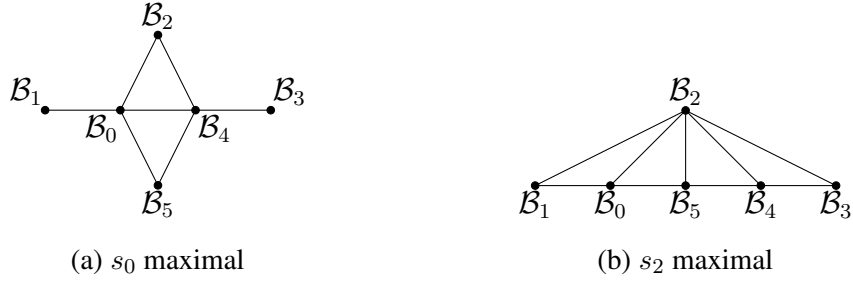
\begin{figure}[t!]
    \centering
    \begin{subfigure}[t]{0.5\textwidth}
        \centering
        \begin{tikzpicture}

	\draw [fill=black] (0,0) circle (.05);
	\draw [fill=black] (1,0) circle (.05); 
	\draw [fill=black] (2,0) circle (.05);
	\draw [fill=black] (3,0) circle (.05);
	\draw [fill=black] (1.5,1) circle (.05);
	\draw [fill=black] (1.5,-1) circle (.05);
	
	\draw (0,0) -- (3,0);
	\draw (1,0) -- (1.5,1);
	\draw (1,0) -- (1.5,-1);
	\draw (2,0) -- (1.5,1);
	\draw (2,0) -- (1.5,-1);

	\node at (-.25,.25) {$\mathcal{B}_1$};
	\node at (.75,-0.25) {$\mathcal{B}_0$};
	\node at (2.25,-0.25) {$\mathcal{B}_4$};
	\node at (3.25,0.25) {$\mathcal{B}_3$};
	\node at (1.5,1.25) {$\mathcal{B}_2$};
	\node at (1.5,-1.25) {$\mathcal{B}_5$};

    \end{tikzpicture}
        \caption{$s_0$ maximal}
    \end{subfigure}%
    ~ 
    \begin{subfigure}[t]{0.5\textwidth}
        \centering
        \begin{tikzpicture}

	\draw [fill=black] (0,0) circle (.05);
	\draw [fill=black] (1,0) circle (.05); 
	\draw [fill=black] (3,0) circle (.05);
	\draw [fill=black] (2,0) circle (.05);
	\draw [fill=black] (4,0) circle (.05);
	\draw [fill=black] (2,1) circle (.05);
	
	\draw (0,0) -- (4,0);
	\draw (0,0) -- (2,1);
	\draw (1,0) -- (2,1);
	\draw (2,0) -- (2,1);
	\draw (3,0) -- (2,1);
	\draw (4,0) -- (2,1);

	\node at (00,-0.25) {$\mathcal{B}_1$};
	\node at (1,-0.25) {$\mathcal{B}_0$};
	\node at (3,-0.25) {$\mathcal{B}_4$};
	\node at (4,-0.25) {$\mathcal{B}_3$};
	\node at (2,1.25) {$\mathcal{B}_2$};
	\node at (2,-.25) {$\mathcal{B}_5$};

    \end{tikzpicture}
        \caption{$s_2$ maximal}
    \end{subfigure}
    \caption{Horoball adjacency graphs for the two optimal packings of $\widehat{UR}_5$.}
    	\label{fig:ur5_adj}
\end{figure}

\begin{table}[h!]
\resizebox{\columnwidth}{!}{%
	\begin{tabular}{|l|l|l|}
		 \hline
		 \multicolumn{3}{|c|}{{\bf Coxeter Simplex Tilings} }\\
		 \hline
		 & \multicolumn{2}{|c|}{{\bf Index 8 Sequence} } \\
		 \hline
		 Witt Symb. &  $\overline{O}_5$ &  $\widehat{UR}_5$ \\
		 \hline
		 \multicolumn{3}{|c|}{{\bf Vertices of Simplex} }\\
		 \hline
		 $A_0$ & $(1,0,0,0,0,1)^*$ & $(1, 0, 0, 0, 0, 1)^*$ \\
		 $A_1$ & $(1,0,0,0,0,-1)^*$ & $(1, 0, 0, 0, 0, -1)^*$ \\
		 $A_2$ & $(1,0,\frac{1}{4},\frac{\sqrt{2}}{4},\frac{\sqrt{6}}{4},\frac{1}{2})$ 
		 	    & $1,0,0,0,-\frac{12}{13},\frac{5}{13})^*$ \\
		 $A_3$ & $(1,0,0,\frac{\sqrt{2}}{4},\frac{\sqrt{6}}{4},\frac{1}{2})$ 
		 	    & $(1,0,0,\frac{\sqrt{3}}{4},-\frac{3}{4},\frac{1}{2})^*$  \\
		 $A_4$ & $(1,0,0,0,\frac{\sqrt{6}}{4},\frac{1}{4})$ 
		 	    & $(1,0,\frac{4 \sqrt{6}}{35},\frac{8 \sqrt{3}}{35},\frac{24}{35},\frac{19}{35})^*$ \\
		 $A_5$ & $(1,\frac{\sqrt{3}}{4},\frac{1}{4},\frac{\sqrt{2}}{4},\frac{\sqrt{6}}{4},\frac{1}{2})^*$ 
		 	    & $(1,\frac{3 \sqrt{2}}{11},\frac{\sqrt{6}}{11},\frac{2 \sqrt{3}}{11},-\frac{6}{11},\frac{7}{11})^*$ \\
		 \hline
		 \multicolumn{3}{|c|}{{\bf The form $\mbox{\boldmath$u$}_i$ of sides opposite $A_i$ }}\\
		\hline
		 $\mbox{\boldmath$u$}_0$ & $(1,0,0,0,-\sqrt{6},1)^T$ & 
		 $(1,-\frac{3}{2 \sqrt{2}},-\frac{1}{2}\sqrt{\frac{3}{2}},-\frac{\sqrt{3}}{2},\frac{3}{2},1)^T$ \\
		 $\mbox{\boldmath$u$}_1$ & $(1,0,0,0,-\frac{\sqrt{6}}{3},-1)^T$ & $(1, 0, 0, 0, 2/3, -1)^T$  \\
		 $\mbox{\boldmath$u$}_2$ & $(0,0,0,-1,-\frac{1}{\sqrt{3}},0)^T$ & $(0,0,0,-1,-\sqrt{3}^{-1},0)$\\
		 $\mbox{\boldmath$u$}_3$ & $(0,0,-\sqrt{2},1,0,0)^T$ & $(0,0,-\sqrt{2},1,0,0)^T$ \\
		 $\mbox{\boldmath$u$}_4$ & $(0,0,0,-1,\frac{1}{\sqrt{3}},0)^T$ & $(0,-\sqrt{3}^{-1},1,0,0,0)^T$  \\
		 $\mbox{\boldmath$u$}_5$ & $(0,1,0,0,0,0)^T$ & $(0,1,0,0,0,0)^T$  \\
		 \hline
		 \multicolumn{3}{|c|}{{\bf Maximal horoball-type parameter $s_i$ for horoball $\mathcal{B}_i$ at $A_i$ }}\\
		\hline
		 $s_0$ & $s_0=\frac{1}{5}  (\delta_{s_0} =\frac{8}{10}\delta_{opt} = 0.47537\dots)$ & $\frac{1}{17} (\delta_{s_0} = \frac{4}{10}\delta_{opt} =0.23768\dots)$ \\
		 $s_1$ & $s_1=\frac{5}{7}  (\delta_{s_1}=\frac{8}{10}\delta_{opt})$ & $\frac{4}{5} (\delta_{s_1} = \frac{4}{10}\delta_{opt})$ \\
		 $s_2$ & $-$ & $\frac{133}{205} (\delta_{s_2} =\frac{6}{10}\delta_{opt}=0.35653\dots)$ \\
		 $s_3$ & $-$ & $\frac{15}{17} (\delta_{s_3} = \frac{4}{10}\delta_{opt})$ \\
		 $s_4$ & $-$ & $\frac{1153}{1297} (\delta_{s_4} =\frac{4}{10}\delta_{opt})$ \\
		 $s_5$ & $\frac{13}{19} (\delta_{s_5}=\frac{6}{10}\delta_{opt})$ & $\frac{103}{139} (\delta_{s_5} = \frac{6}{10}\delta_{opt})$ \\
		 \hline
		 \multicolumn{3}{|c|}{{\bf Progressions}}\\
		\hline
		 $\max s_0$ & $s_0=\frac{1}{5} \rightarrow s_1=\frac{1}{5} \rightarrow s_5=\frac{29}{35} (\delta = 0.59422)$ & $s_0=\frac{1}{17} \rightarrow (s_1=-\frac{1}{17}, s_2=\frac{151}{187},$ \\
		  & $\rotatebox[origin=c]{180}{$\Lsh$} s_5=\frac{29}{35} \rightarrow s_1=\frac{1}{5} (\delta= 0.59422)$ &   $s_4=\frac{1153}{1297}, s_5=233/251) \xrightarrow[]{s_4}  s_3=\frac{127}{129}$\\
		 \hline
		 $\max s_1$ & $s_1=\frac{5}{7} \rightarrow s_0=\frac{5}{7} \rightarrow s_5 = \frac{29}{35} (\delta=0.59422)$ &  see $\max s_0$\\
		  & $\rotatebox[origin=c]{180}{$\Lsh$} s_5= \frac{29}{35}\rightarrow s_1= \frac{5}{7} (\delta=0.59422)$ &  \\
		 \hline
		 $\max s_2$ & $-$ &  $s_2=\frac{133}{205} \rightarrow (s_0= \frac{5}{13}, s_1=-\frac{5}{13}, s_3=\frac{63}{65}, $\\
		 & & $s_4=\frac{1189}{1261}, s_5=\frac{56}{65}$\\
		 \hline
		 $\max s_3$ & $-$ & see $\max s_0$ \\
		 \hline
		 $\max s_4$ & $-$ & see $\max s_0$ \\
		 \hline
		 $\max s_5$ & $s_5= \frac{13}{19} \rightarrow s_0=\frac{1}{2} \rightarrow s_1=\frac{1}{2} (\delta=0.59421\dots)$ & see $\max s_2$ \\
		  & $\rotatebox[origin=c]{180}{$\Lsh$} s_1=\frac{1}{2} \rightarrow s_0=\frac{1}{2} (\delta=0.59421\dots)$ &  \\
		\hline
		 \multicolumn{3}{|c|}{ {\bf Optimal Horoball Packing Density} }\\
		\hline
		$\delta_{opt} $ & 0.59421\dots & 0.59421\dots\\
		\hline
	\end{tabular}%
	}
	\caption{Data for triply asymptotic and totally asymptotic Coxeter simplex tilings in the Cayley-Klein ball model of radius $1$ centered at $(1,0,0,0,0,0)$. Vertices marked with $^*$ are ideal.}
	\label{table:data_our}
\end{table}

\begin{figure}
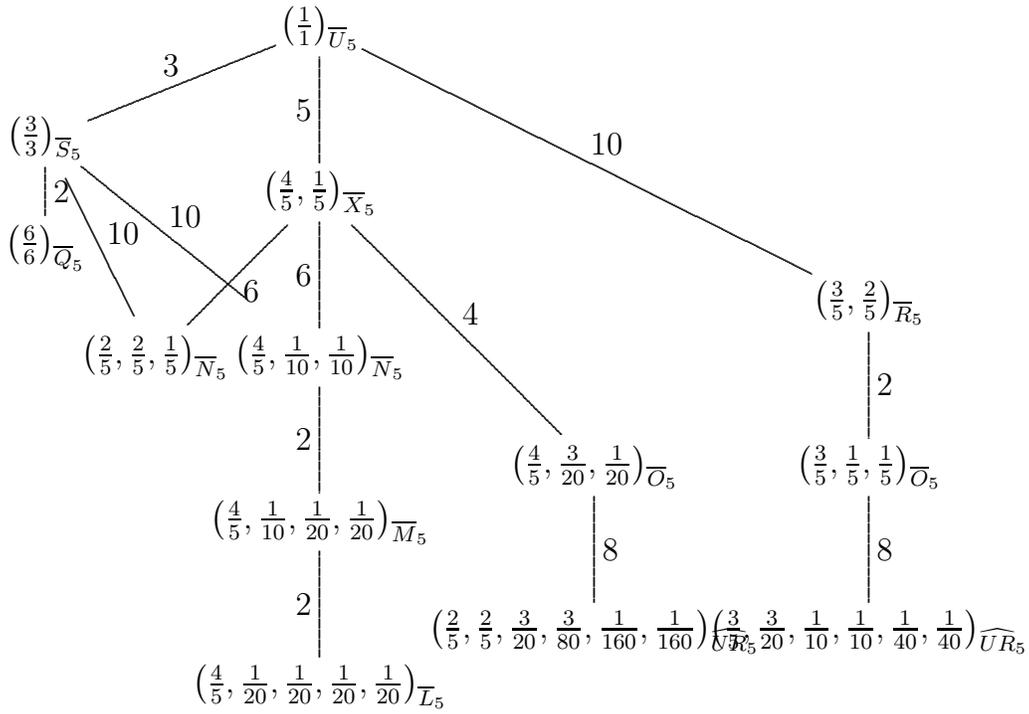

\resizebox{\columnwidth}{!}{%
\begindc{\commdiag}[200]
\obj(5,10)[uu]{$\left(\frac{1}{1}\right)_{\overline{U}_5}$}
\obj(0,8)[ss]{$\left(\frac{3}{3}\right)_{\overline{S}_5}$}
\obj(0,6)[qq]{$\left(\frac{6}{6}\right)_{\overline{Q}_5}$}
\obj(5,7)[xx]{$\left(\frac{4}{5},\frac{1}{5}\right)_{\overline{X}_5}$}
\obj(5,4)[nn]{$\left(\frac{4}{5},\frac{1}{10},\frac{1}{10}\right)_{\overline{N}_5}$}
\obj(2,4)[nn2]{$\left(\frac{2}{5},\frac{2}{5},\frac{1}{5}\right)_{\overline{N}_5}$}
\obj(5,1)[mm]{$\left(\frac{4}{5},\frac{1}{10},\frac{1}{20},\frac{1}{20}\right)_{\overline{M}_5}$}
\obj(5,-2)[ll]{$\left(\frac{4}{5},\frac{1}{20},\frac{1}{20},\frac{1}{20},\frac{1}{20}\right)_{\overline{L}_5}$}
\obj(15,5)[rr]{$\left(\frac{3}{5},\frac{2}{5}\right)_{\overline{R}_5}$}
\obj(15,2)[oo]{$\left(\frac{3}{5},\frac{1}{5},\frac{1}{5}\right)_{\overline{O}_5}$}
\obj(15,-1)[ur]{$\left(\frac{3}{5},\frac{3}{20},\frac{1}{10},\frac{1}{10},\frac{1}{40},\frac{1}{40}\right)_{\widehat{UR}_5}$}
\obj(10,2)[oo2]{$\left(\frac{4}{5},\frac{3}{20},\frac{1}{20}\right)_{\overline{O}_5}$}
\obj(10,-1)[ur2]{$\left(\frac{2}{5},\frac{2}{5},\frac{3}{20},\frac{3}{80},\frac{1}{160},\frac{1}{160}\right)_{\widehat{UR}_5}$}
\mor{uu}{ss}{$3$}[\atright, \solidline]
\mor{uu}{xx}{$5$}[\atright, \solidline]
\mor{xx}{nn}{$6$}[\atright, \solidline]
\mor{nn}{mm}{$2$}[\atright, \solidline]
\mor{mm}{ll}{$2$}[\atright, \solidline]
\mor{uu}{rr}{$10$}[\atleft, \solidline]
\mor{rr}{oo}{$2$}[\atleft, \solidline]
\mor{oo}{ur}{$8$}[\atleft, \solidline]
\mor{xx}{oo2}{$4$}[\atleft, \solidline]
\mor{oo2}{ur2}{$8$}[\atleft, \solidline]
\mor{ss}{qq}{$2$}[\atleft, \solidline]
\mor{ss}{nn}{$10$}[\atleft, \solidline]
\mor{ss}{nn2}{$10$}[\atleft, \solidline]
\mor{xx}{nn2}{$6$}[\atleft, \solidline]
\enddc%
}
\caption{Classification of the (eleven) optimal horoball packings in $\mathbb{H}^5$ by relative packing density as fraction of $\delta_{opt}$. These optimal packings are only derived from arithmetic Coxeter groups. }
\label{fig:Bifurcation diagram}
\end{figure}

\section{Conclusion}

In this paper we investigated horoball packings of asymptotic Koszul Coxeter simplex tilings of $\mathbb{H}^5$. The main result of this paper is summarized in the following theorem:

\begin{theorem}
\label{mainresult}
In $\mathbb{H}^5$ the horoball packing density $\delta_{opt}(\cT_{\Gamma}) = 0.59421\dots$ 
is realized in the ten arithmetic commensurable asymptotic Coxeter simplex tilings 
$$\Gamma \in \left\{\overline{U}_5,  \overline{S}_5, \overline{Q}_5, \overline{X}_5, \overline{R}_5, \overline{N}_5, \overline{O}_5, \overline{M}_5, \overline{L}_5, \widehat{UR}_5  \right\},$$ 
with horoballs of different types allowed at each asymptotic vertex of the tiling. 
\end{theorem}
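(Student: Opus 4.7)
The plan is to assemble the theorem from the case-by-case results already established for each of the ten arithmetic tilings in the commensurability lattice of Figure \ref{fig:lattice_of_subgroups}. First, I would invoke the simply asymptotic results: Theorem~2 gives $\delta_{opt}(\Gamma) = 0.59421\dots$ directly for $\Gamma \in \{\overline{U}_5, \overline{S}_5, \overline{Q}_5\}$ via the explicit horoball computation carried out in Proposition~1 (the tiling $\overline{P}_5$ is excluded since it is non-commensurable and yields a strictly smaller density). The Witt symbol $\overline{U}_5$ is the ``root'' of the commensurability class, and its optimal arrangement uses a single extremal-type horoball tangent to the opposite hyperface, with density computed by equations \eqref{eq:horo_dist}, \eqref{eq:bolyai}, and the Cayley--Menger determinant.

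Next, I would dispatch the doubly and triply asymptotic cases $\overline{X}_5, \overline{R}_5, \overline{N}_5, \overline{O}_5$ using Propositions 2--5 together with Lemma~\ref{lemma:szirmai}. The scheme is uniform: place one horoball of maximal type (tangent to the opposite facet), inflate each remaining horoball until it is tangent to its already-placed neighbors, and then vary the shared type parameter $x$ along the edge joining two asymptotic vertices. By Lemma~\ref{lemma:szirmai} the sum $V(x) = \frac{V(0)}{2}(e^{4x}+e^{-4x})$ is strictly convex in $x$, so the density $\delta_x(\Gamma)$ attains its maximum only at the endpoints of the admissible interval $[0,\arctanh\tfrac{1}{3}]$, and the endpoint computation yields exactly $\delta_{opt} = 0.59421\dots$ at each extremal configuration. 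The rational fraction structure $(\tfrac{k}{10}\delta_{opt})$ exhibited in Tables \ref{table:data_two} and \ref{table:data_nml} shows that the relative Dirichlet--Voronoi contributions of the several horoball types sum to $\delta_{opt}$ in every extremal configuration.

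To extend to the remaining tilings $\overline{M}_5, \overline{L}_5$ and the totally asymptotic $\widehat{UR}_5$, I would exploit the doubling sequence of fundamental domains together with the subgroup indices read off Figure \ref{fig:lattice_of_subgroups}. Since $|\Gamma_{\overline{N}_5}:\Gamma_{\overline{M}_5}| = |\Gamma_{\overline{M}_5}:\Gamma_{\overline{L}_5}| = 2$ and analogous chains lead down to $\widehat{UR}_5$, every optimal horoball arrangement of the ``parent'' tiling lifts to an admissible arrangement of the ``child'' tiling of equal density, giving the lower bound $\delta_{opt}(\Gamma) \geq 0.59421\dots$. For the matching upper bound, one applies Lemma~\ref{lemma:szirmai} iteratively at each asymptotic vertex introduced by the reflection step, checking that convexity in the type parameters forces each extremal candidate to lie at a corner of the admissible polytope, and all such corners yield density exactly $0.59421\dots$ as confirmed in the tables.

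The main obstacle will be the upper-bound half for the multiply asymptotic tilings, especially $\widehat{UR}_5$, where one must check all locally optimal configurations across a polytope of type parameters $(s_0,\dots,s_5)$ subject to both the pairwise tangency constraints and the requirement that no horoball cross the opposite facet. The exhaustive case-check is organized by fixing in turn which horoball is of maximal (facet-tangent) type; the contact graphs in Figure \ref{fig:ur5_adj} show that the symmetry of the Coxeter diagram collapses these six potential cases into two equivalence classes, and Lemma~\ref{lemma:szirmai} together with the computations recorded in Table \ref{table:data_our} reduces the verification to checking that both equivalence classes give the same total density $0.59421\dots$. Once this is done, the optimality across the ten arithmetic commensurable tilings is established, completing the proof of Theorem~\ref{mainresult}.
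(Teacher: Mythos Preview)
Your proposal is correct and follows essentially the same route as the paper: Theorem~\ref{mainresult} is a summary statement assembled from the case-by-case Propositions~1--6, Theorem~2, and Corollary~1, with Lemma~\ref{lemma:szirmai} supplying the convexity argument that pins the optimum to the endpoint configurations in the multiply asymptotic cases, and the doubling/subgroup relations of Figure~\ref{fig:lattice_of_subgroups} propagating the density through $\overline{M}_5$, $\overline{L}_5$, and $\widehat{UR}_5$. If anything, your sketch of the upper-bound verification for $\widehat{UR}_5$ is more explicit than what the paper records, since the paper omits those technical details.
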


\begin{remark}
Consider two horoball packings to be in a same class if their symmetry groups are isomorphic. 
In this sense one can distinguish between eleven different maximal horoball packings of optimal density. 
\end{remark}

The optimal packing density obtained in Theorem \ref{mainresult} is the densest ball packing of $\mathbb{H}^5$ known to the authors at the time of writing. 
However, it does not exceed the B\"or\"oczky-type simplicial upper bound for $\mathbb{H}^5$ of $0.60695\dots$. The packings we described give a new 
lower bound for the optimal ball packing density of $\mathbb{H}^5$ (see \cite{Sz12}). 

\begin{corollary}
The optimal ball packing density with horoball in same type $\delta_{opt}$ of $\mathbb{H}^5$ is bounded between
$$0.59421\dots \leq \delta_{opt} \leq 0.60695\dots.$$
\end{corollary}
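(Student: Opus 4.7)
The plan is to establish the two inequalities separately, since they come from completely different sources. The lower bound is immediate from the main result of the paper, while the upper bound is a direct consequence of the simplicial density bound of Kellerhals applied in dimension $n=5$.

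For the lower bound, Theorem \ref{mainresult} explicitly constructs horoball packings of $\mathbb{H}^5$ achieving density $\delta_{opt}(\cT_\Gamma) = 0.59421\dots$ for each of the ten listed arithmetic commensurable Coxeter simplex tilings. In the simply asymptotic case $\overline{U}_5$ (for example), the fundamental simplex has a unique ideal vertex, so the Coxeter group $\Gamma_{\overline{U}_5}$ forces every horoball in the packing to be in the orbit of $\mathcal{B}_0$, and all horoballs are therefore of the same type. Extending this packing globally using $\Gamma_{\overline{U}_5}$ yields a same-type horoball packing of $\mathbb{H}^5$ of density $0.59421\dots$, so the same-type optimum satisfies $\delta_{opt} \geq 0.59421\dots$.

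For the upper bound, I would invoke Kellerhals' result \cite{K98}, which establishes that whenever the simplicial density function $d_n(r)$ is monotone increasing in $r$ (shown by Marshall \cite{Ma99} for $n$ in the range we need), the local density of any ball or horoball in its Dirichlet--Voronoi cell is bounded above by the simplicial horoball density $d_n(\infty)$. Here $d_n(\infty)$ is the density of $n+1$ mutually tangent congruent horoballs centered at the vertices of a totally asymptotic regular ideal simplex in $\mathbb{H}^n$, measured against that simplex. The key concrete step is therefore to compute $d_5(\infty)$: one places six mutually tangent congruent horoballs at the vertices of the regular ideal $5$-simplex, uses Bolyai's formula \eqref{eq:bolyai} together with the Cayley--Menger determinant on the induced horospheric $4$-simplex (exactly as in the proof of Proposition \ref{proposition:s4}) to evaluate the volume of one horoball piece, and divides by the known volume of the regular ideal $5$-simplex. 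This yields $d_5(\infty) = 0.60695\dots$, giving $\delta_{opt} \leq 0.60695\dots$.

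The main obstacle is not conceptual but verificational: one must confirm that the hypotheses of Kellerhals' theorem apply in dimension $n=5$, i.e.\ that $d_5(r)$ lies in the range of dimensions covered by Marshall's monotonicity argument, so that $d_5(\infty)$ is a legitimate upper bound on the local Dirichlet--Voronoi density of any congruent ball or horoball packing. Once that is in hand, the explicit numerical value $0.60695\dots$ follows from a direct calculation in the regular ideal $5$-simplex, and combining both bounds yields the stated interval.
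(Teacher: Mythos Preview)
Your lower-bound argument is correct and matches the paper exactly: the simply asymptotic packing for $\overline{U}_5$ has a single horoball orbit, so it is a same-type packing realizing density $0.59421\dots$.

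For the upper bound, however, you take a detour that introduces a genuine gap. You route the argument through Kellerhals \cite{K98} together with Marshall's monotonicity of $d_n(r)$ \cite{Ma99}, and you yourself flag the obstacle: one must check that $n=5$ falls in Marshall's range. But the paper's own introduction states that Marshall's result holds only ``for sufficiently large $n$,'' and no verification for $n=5$ is available here. So as written, your upper-bound argument does not close.

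The paper avoids this entirely. It does not invoke monotonicity at all; it simply cites the B\"or\"oczky-type simplicial upper bound for horoball packings, i.e.\ the horoball case of Theorem~1 (B\"or\"oczky \cite{B78}), which for a packing by congruent horoballs directly bounds the Dirichlet--Voronoi density by $d_5(\infty)=0.60695\dots$. Monotonicity of $d_n(r)$ is only needed if you want $d_n(\infty)$ to bound packings by \emph{finite-radius} balls; for same-type horoball packings the B\"or\"oczky bound at $r=\infty$ already is $d_n(\infty)$. Replace your Kellerhals/Marshall paragraph with a direct appeal to the B\"or\"oczky horoball bound and the computation of $d_5(\infty)$ in the regular ideal $5$-simplex (your Cayley--Menger/Bolyai computation is the right tool for that numerical value), and the proof is complete.
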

\begin{remark}
We note here, that if we generalized to use horoballs of different types, then the new simplicial upper bound is greater than the B\"or\"oczky-type upper bound for $\mathbb{H}^5$.
\end{remark}
In this paper we considered the generalized simplicial densities of the horoball packings. 
{\it It is an interesting fact, that the optimal horoball packings belong to the arithmetic Coxeter groups and yield the same density. Moreover, the contributions of the individual horoballs to the global density of the packing are in rational proportions, as described in Fig.  \ref{fig:Bifurcation diagram}. The reasons of this fact are under investigation.}
It would be instructive to compare to the local Dirichlet--Voronoi 
densities of each horoball in our family of packings, and present the density of the packing as a weighted average over the cells. Agains see Fig. \ref{fig:Bifurcation diagram}
for a preliminary result. 

Results on the monotonicity of simplicial density function $d_n(r)$ for $n=5$ may help establish the optimality of our packings in $\mathbb{H}^5$ as 
in the case of $\mathbb{H}^3$ (cf. Section 1). These questions are the subject of ongoing research.

%============================================================================%
%                                references                                  %
%============================================================================%

\end{document}